\documentclass[11pt]{article}
\usepackage[a4paper,margin=2.6cm]{geometry}
\usepackage{amsmath,amssymb,amsthm}
\usepackage{booktabs}
\usepackage[expansion=false]{microtype}

\newtheorem{theorem}{Theorem}
\newtheorem{lemma}{Lemma}
\newtheorem{proposition}{Proposition}
\newtheorem{corollary}{Corollary}
\newtheorem{definition}{Definition}
\theoremstyle{remark}
\newtheorem{remark}{Remark}
\newtheorem{question}{Question}

\newcommand{\LB}{\mathrm{LB}}
\newcommand{\Vref}{V_{\mathrm{ref}}}
\newcommand{\diam}{\operatorname{diam}}
\newcommand{\eLB}{E_{\LB}}
\newcommand{\R}{\mathbb{R}}
\newcommand{\N}{\mathbb{N}}
\newcommand{\Cstab}{C_{\mathrm{stab}}}
\newcommand{\ub}{\bar u}

\title{The Certification Limits of KS-Type Layer Relaxations:\\
A Square-Root Ceiling and Its Breakdown}
\author{Guillaume Lecomte\thanks{Corresponding author. Email:
\texttt{guillaume.lecomteexed@edu.executive.em-lyon.com}.}}
\date{July 2026}

\begin{document}
\maketitle

\begin{abstract}
A separable bilinear layer relaxation assigns a population cost to each
layer and a nonpositive bilinear coupling to each pair of layers, then
excludes a profile span whenever the relaxed minimum exceeds a reference.
The central question addressed here is what a Kuznetsov--Sahinidis-type
layer relaxation can certify at best. We prove that this relaxation
architecture is subject to an intrinsic square-root ceiling. More
precisely, let $\gamma:=g(1)$ denote the singleton-layer cost. If the
relaxation is valid with $\gamma>-\tfrac12$, the reference is bounded per
unit size, and the underlying system admits two elementary families of
low-energy witness configurations, then
\[
N-\rho(N)\le 2\sqrt{vN}+O_v(1),
\qquad
v=\frac{\bar u+\max(\gamma,0)}{1+2\gamma},
\]
where $\bar u$ is the uniform reference bound. Hence no valid relaxation
in this class can certify a deficit growing faster than $O(\sqrt N)$,
independently of the detailed choice of the layer cost, interaction
kernel, or bounded reference sequence. No monotonicity, summability,
asymptotic growth, or shape assumption is imposed on the layer
functional. The threshold is sharp within the abstract witness-axiom class: at
$\gamma=-\tfrac12$, we construct a valid relaxation on a system
satisfying the witness axioms, with a bounded sound reference, for which
$N-\rho(N)=N-1$. For the geometric Lennard--Jones system, by contrast, a
three-atom chain witness restores the ceiling at the endpoint: the
counterexample separates the abstract class from the geometric one at
$\gamma=-\tfrac12$ exactly. Sufficiently far below the threshold the geometric protection fails as
well: we construct a valid relaxation of the Lennard--Jones system
itself, with the exact ground-state reference, whose deficit is linear
in $N$; and once the singleton cost falls below the negative optimal
stability constant, $\gamma\le-C$, no sound relaxation excludes
anything at all. The certifying power is thus reentrant in the singleton
cost, with one transition zone still open. Lennard--Jones
clusters provide the principal existing instance, while the witness
conditions also hold for broad normalized Morse and Mie families. Thus
every future valid KS-type construction for such systems inherits the
ceiling before its detailed constants are computed. The result is
one-sided: matching $\Theta(\sqrt N)$ laws require additional structure.
\end{abstract}

\noindent\textbf{MSC 2020:} 90C26 (primary); 82B05, 52C17 (secondary).\\
\textbf{Keywords:} Lennard--Jones clusters; layer relaxation; diameter
bounds; deterministic global optimization; relaxation limits.

\section{Introduction}\label{sec:intro}

A recurring pattern in certified computation is the one-dimensional layer
relaxation. An extended object of size $N$ is cut along one direction into
consecutive layers with populations $n_1,\dots,n_D\ge1$,
$\sum_kn_k=N$; each layer is charged a function $g(n_k)$ of its population
alone, each pair of layers a bilinear coupling $-\kappa(j-i)\,n_in_j$; and
a profile span $D$ is excluded whenever the minimum of the resulting
functional over all populations exceeds a reference value. The largest
non-excluded profile length, $\rho(N)$, is the output of the relaxation
procedure. In a geometric application, when the profile span controls a
genuine extent and the reference is sound, this output becomes a certified
upper bound on that extent.

The construction is not hypothetical. In the deterministic global
optimization of Lennard--Jones clusters, where certified boxes drive
branch-and-bound, the layer bound of Kuznetsov and Sahinidis \cite{ks2025}
is exactly of this form, and it is the principal certified diameter bound
developed for that problem; rigorous interatomic-distance bounds
\cite{xue,kiessling2025} complement it. The physically expected diameter
of a cluster grows only like $N^{1/3}$, so the accuracy of such
certificates is the natural question. Related work by the author
determines exact asymptotics for the concrete Kuznetsov--Sahinidis bound
\cite{lecomteB} and matching scaling laws under additional assumptions on
the kernel, the layer cost and the reference \cite{lecomteA}. The present
paper is logically independent of those analyses: none of their results
is used, and the kernel control and layer-cost estimates required here
are derived directly from validity against explicit witness
configurations.

This note removes the model-dependent assumptions on $g$ and $\kappa$ and
replaces them by two witness axioms on the underlying system. The
remaining hypotheses are validity, a lower bound on the singleton cost,
$\gamma:=g(1)>-\tfrac12$, boundedness of the reference per unit size, and
the existence of a low-energy adjacent pair and a low-energy chain
carrying an arm. No other property of the energy enters. The theorem
therefore knows nothing of the Lennard--Jones exponents and applies to any
geometric or nongeometric problem whose objects carry layer profiles and
admit the witnesses. Its conclusion is an explicit ceiling: no valid
relaxation with $\gamma>-\tfrac12$ can produce a deficit growing faster
than order $\sqrt N$, the constant depending on the reference bound and on
$\gamma$, and reducing to $2\sqrt{\ub\,N}$ in the normalized case
$\gamma=0$. The class need not be densely populated for this to be
informative. The Kuznetsov--Sahinidis bound \cite{ks2025} is the
principal implemented instance, and it is normalized; the theorem
identifies the obstruction before further examples are developed.

Two boundaries of the statement are drawn at the outset. The ceiling is
one-sided: it caps the deficit at $O(\sqrt N)$ for every relaxation above
the threshold, and weak relaxations may exclude nothing at all; matching
$\Theta(\sqrt N)$ laws require additional structural hypotheses and are
not consequences of the theorem. And the threshold is exact in the qualitative sense: the constant in our
bound diverges as $\gamma\downarrow-\tfrac12$, the endpoint
counterexample shows that some deterioration is unavoidable, and at
$\gamma=-\tfrac12$ the conclusion fails outright,
Section~\ref{sec:threshold} exhibiting a valid relaxation on a
witness-satisfying system, with a bounded sound reference, whose deficit
is $N-1$. The pair $\gamma>-\tfrac12$ and the counterexample at
$\gamma=-\tfrac12$ thus delimit the theorem completely within the
abstract class. The geometric Lennard--Jones system behaves differently
at the endpoint: a three-atom chain witness, unavailable abstractly,
forces enough coupling weight to restore the ceiling at $\gamma=-\tfrac12$
(Proposition~\ref{prop:geoendpoint}), so the counterexample measures a
genuine gap between the witness axioms and geometric validity. The
protection is not unconditional: sufficiently far below the threshold, a
valid relaxation of the geometric system itself, with the exact
ground-state reference, attains a linear deficit
(Proposition~\ref{prop:geofailure}). At the opposite extreme, when
$\gamma\le-C$, even the full singleton span is never excluded by any
sound reference, so the deficit vanishes identically
(Proposition~\ref{prop:deepcost}). The certifying power is therefore
reentrant in the singleton cost: zero, then potentially linear, then
universally at most of square-root order. The remaining transition zone
between the linear and square-root regimes is open. To the best of our knowledge, no ceiling of this generality appears
in the literature.

\section{The class}\label{sec:class}

\begin{definition}[Abstract layer system]\label{def:system}
An \emph{abstract layer system} is a class
$\mathcal X=\bigsqcup_{N\ge1}\mathcal X_N$ of objects called
configurations, an energy $E:\mathcal X\to\R$, and a profile map assigning
to each $X\in\mathcal X_N$ a tuple $n(X)=(n_1,\dots,n_D)$ of positive
integers with $\sum_kn_k=N$. The integer $D$ is the \emph{profile span}.
A geometric instance may additionally carry an extent functional $L(X)$
related to $D$; this extra structure is not used in the abstract theorem.
\end{definition}

\begin{definition}[Separable bilinear layer relaxation]\label{def:relax}
A \emph{separable bilinear layer relaxation} on a system is a pair
$(g,\kappa)$ with $g:\N\to\R$ and $\kappa:\{1,2,\dots\}\to[0,\infty)$,
acting through
\begin{equation}\label{eq:elb}
\eLB(n)\;=\;\sum_{k=1}^{D}g(n_k)\;-\;\sum_{1\le i<j\le D}\kappa(j-i)\,n_in_j.
\end{equation}
It is \emph{valid} if $E(X)\ge\eLB(n(X))$ for every $X\in\mathcal X$. Its
\emph{singleton cost} is $\gamma:=g(1)$, and it is \emph{normalized} if
$\gamma=0$. Given a reference sequence $\Vref(N)=-u_N\,N$ with $u_N\ge0$, a
profile span $D$ is \emph{excluded} at size $N$ when
\[
\min\{\eLB(n):n\in\N_{\ge1}^D,\ \sum_kn_k=N\}>\Vref(N).
\]
The \emph{relaxation span} is
$\rho(N)=\max\{D\le N:D\text{ not excluded}\}$, with $\rho(N):=0$ if
every span is excluded. Write $\ub:=\sup_Nu_N$. If the reference is sound
for the underlying optimization problem and the profile span controls a
geometric extent, then $\rho(N)$ has the corresponding certified
interpretation.
\end{definition}

Normalization is canonical for per-layer terms representing internal layer
energy, since a singleton layer carries no internal interaction; the
Kuznetsov--Sahinidis bound satisfies it identically \cite{ks2025}. It
is nevertheless a genuine restriction of the
abstract class and cannot be imposed by a harmless algebraic shift:
subtracting $\gamma$ from $g$ changes the functional by an amount
proportional to the profile span. The theorem below quantifies exactly how
much singleton cost the ceiling tolerates.

\begin{definition}[Witness axioms]\label{def:witness}
A system satisfies the witness axioms if:
\begin{itemize}
\item[(W1)] there is a configuration with profile $(1,1)$ and energy at
most $-1$;
\item[(W2)] for every $n\ge2$ there is a configuration with $m=2n$ layers,
profile all-ones except $n_c=n$ at $c=n$, and energy at most
$-(3n-2)$.
\end{itemize}
\end{definition}

The axioms concern the problem, not the relaxation: they assert that the
energy landscape contains a cheap adjacent pair and a cheap chain-with-arm
family. In the applications of Section~\ref{sec:instances} they are
verified by explicit constructions; abstractly they are the entire
contribution of the underlying energy.

\begin{remark}[Energy scale]\label{rem:scale}
The numerical constants in (W1) and (W2) fix units rather than the
mechanism. Analogous axioms with a well depth $\varepsilon>0$ and a witness
energy bounded above by a negative affine function of $n$ lead, after
rescaling, to the same square-root exponent with modified explicit
constants, the threshold on the singleton cost rescaling accordingly.
\end{remark}

\section{The ceiling}\label{sec:main}

Throughout this section $(g,\kappa)$ is a valid separable bilinear layer
relaxation on a system satisfying (W1) and (W2), $(u_N)$ is a bounded
reference sequence, and
\[
\gamma:=g(1),\qquad
\kappa_0:=1+2\gamma,\qquad
w:=\ub+\max(\gamma,0),\qquad
v:=\frac w{\kappa_0}.
\]

\begin{theorem}[Universal square-root ceiling above the threshold]
\label{thm:main}
If $\gamma>-\tfrac12$, then
\begin{equation}\label{eq:ceiling}
N-\rho(N)\;\le\;2\sqrt{v\,N}+O_v(1).
\end{equation}
In particular $N-\rho(N)=O(\sqrt N)$: no relaxation of the class with
singleton cost above the threshold produces a span below
$N-2\sqrt{v\,N}-O_v(1)$, and a deficit of order $N^{1-\delta}$ is
impossible for every $0<\delta<1/2$.
\end{theorem}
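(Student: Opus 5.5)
The plan is to exhibit, for each $N$, a profile with span $D=N-\delta$, $\delta\approx2\sqrt{vN}$, whose relaxed value satisfies $\eLB(n)\le-\ub N\le\Vref(N)$. Then $D$ is not excluded and $\rho(N)\ge N-\delta$.

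The only unconditional information on $\kappa$ comes from (W1). Validity on the $(1,1)$ witness gives $2\gamma-\kappa(1)\le-1$, that is, $\kappa(1)\ge\kappa_0=1+2\gamma>0$. So adjacent layers of sizes $a,b$ contribute a coupling of at least $\kappa_0 ab$, and two adjacent layers of size $a$ gain at least $\kappa_0a^2$ while costing only $2(a-1)$ units of span. Balancing $\kappa_0a^2\approx wN$ gives $a\approx\sqrt{vN}$ and a deficit $\approx2\sqrt{vN}$. This matches the claimed constant, and it fixes the candidate profile: all ones, except two \emph{adjacent} layers of size $a=\lceil\sqrt{vN}\rceil+O(1)$ placed well inside the chain, with $D=N-2(a-1)$.

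The missing ingredient is an upper bound on $g(a)$, which nothing in the relaxation supplies. Here (W2) enters. Validity on the chain-with-arm witness of size $a$ gives
\[
g(a)\le-(3a-2)-(2a-1)\gamma+K_a,
\]
where $K_a$ is the total bilinear coupling of the (W2) profile. $K_a$ involves unknown values of $\kappa$, but it reappears with a minus sign when we evaluate $\eLB$ on our profile. The strategy is therefore not to bound $\kappa$ at all, but to compare $\eLB$ on our profile with $\eLB$ on two copies of the (W2) profile.

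To make the comparison work, I would place the first big layer at a position with at least $a-1$ ones to its left, and the second big layer immediately after it with at least $a$ ones to its right; this is possible since $D\gg 3a$. Each big layer then sees, at every distance required in its own (W2) profile, a layer of population $\ge1$. Since $\kappa\ge0$ and populations are $\ge1$, its coupling terms dominate those in $K_a$. The leftover span consists of ones, each costing $\gamma\le\max(\gamma,0)$, minus nonnegative couplings that we simply discard. On top of this we keep the extra adjacent big--big coupling, which is at least $\kappa_0a^2$.

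Summing, I expect an estimate of the form
\[
\eLB(n)\le D\max(\gamma,0)-\kappa_0a^2+O(a)\le N\max(\gamma,0)-\kappa_0a^2+O_v(\sqrt N).
\]
The $O(a)$ term, which collects the linear contributions $-(3a-2)-(2a-1)\gamma$ and the boundary terms, must be absorbed by enlarging $a$ by an additive $O_v(1)$. Then choosing $\kappa_0a^2\ge wN+O(a)$ gives $\eLB(n)\le-\ub N$. Hence $N-\rho(N)\le2(a-1)\le2\sqrt{vN}+O_v(1)$. The stated consequences, namely $O(\sqrt N)$ and the impossibility of a deficit of order $N^{1-\delta}$ for $0<\delta<\tfrac12$, follow at once.

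The main obstacle is the bookkeeping in the comparison step, specifically the \emph{ones--ones} couplings. $K_a$ counts couplings among the $2a-1$ singleton layers of a witness, and adding the two (W2) inequalities counts the ones--ones couplings of two overlapping windows. Our single profile may contain those pairs only once, so the naive sum could overcharge.

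I would first try to make the two windows disjoint. This means spacing the big layers by their own windows, which loses adjacency. Adjacency can then be restored at the cost of a coupling at some larger distance, or by noting that the ones--ones pairs lying in the overlap contribute with the favourable sign. If disjointness fails, the fallback is to isolate the ones--ones part of $K_a$ and show that validity on the all-ones witness chain (the $n=1$-type degeneration of (W2), or (W1) iterated) bounds it by something already present in our profile. Since each ones--ones coupling is needed only once, the overlap costs at most a term linear in $a$, which is harmless.
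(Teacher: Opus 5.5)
Your comparison strategy is sound and genuinely different from the paper's. However, the proposal stops at the step you flag, and the remedies you list would not close it.

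Separating the piles forfeits the only lower bound on the kernel the axioms give, $\kappa(1)\ge\kappa_0$ from (W1). Nothing prevents $\kappa(r)=0$ for $r\ge2$ in the abstract class, and then the quadratic gain disappears. The ``favourable sign'' remark does not help either: a ones--ones pair counted in both witness inequalities but present once in the profile survives with net coefficient $+\kappa$. The fallback runs the wrong way. Validity on an all-ones configuration gives $k\gamma-B_k\le E$, a \emph{lower} bound on $B_k$, and the axioms supply no such chains beyond $(1,1)$. Finally, ``the overlap costs $O(a)$'' presupposes an upper bound on $\kappa$ that your route never establishes.

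The missing observation is that the ones--ones windows need not be centred at the piles. Write $n_k=1+x_k$ with $x_p=x_{p+1}=a-1$ and $x_k=0$ otherwise. The bilinear term of your profile $P$ is then
\[
B_D+(a-1)\bigl(\Phi_D(p)+\Phi_D(p+1)\bigr)+(a-1)^2\kappa(1),
\]
while the witness coupling is $K_a=B_{2a}+(a-1)\Phi_{2a}(a)$, the split of Lemma~\ref{lem:intra}. The constant part $B_D$ does not see the piles. Since $\kappa(j-i)$ depends only on $j-i$ and $\kappa\ge0$, the disjoint windows $\{1,\dots,2a\}$ and $\{2a+1,\dots,4a\}$ give $B_D\ge2B_{2a}$ whenever $D\ge4a$. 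With $p=a$ one also has $\Phi_D(p),\Phi_D(p+1)\ge\Phi_{2a}(a)$. With $D=N-2a+2$, the two (W2) inequalities then give
\[
\eLB(P)\le(D-4a)\gamma-(6a-4)-(a-1)^2\kappa(1)\le N\max(\gamma,0)-\kappa_0(a-1)^2 .
\]
The gain is $(a-1)^2$, not $a^2$ as you wrote, because $2(a-1)$ units of the pile--pile weight are consumed by the domination. This is harmless: the net coefficient of $\kappa(1)$ stays negative, so no upper bound on $\kappa(1)$ is ever needed. Taking $a-1=\max\bigl(1,\lceil\sqrt{vN}\rceil\bigr)$ yields $\eLB(P)\le-\ub N\le\Vref(N)$. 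Hence $N-\rho(N)\le2\sqrt{vN}+2$ for $N\ge36v+20$, which is what guarantees $D\ge4a$.

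The paper argues differently. It first uses the exclusion dichotomy: exclusion of the full span forces $\sum_{d\le N/2}\kappa(d)<2w$. This kernel-mass bound turns Lemma~\ref{lem:intra} into the absolute estimate $g(n)\le8wn$ (Lemma~\ref{lem:linceiling}), after which every coupling of $P$ except the pile--pile one is discarded.

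Your route instead pays back the couplings that validity charges to $g(a)$ with the same couplings already present in $P$. It therefore needs no dichotomy and no upper control of $\kappa$. It also gives a sharper explicit form than Proposition~\ref{prop:explicit}: $2\sqrt{vN}+2$ from $N\ge36v+20$, against $2\sqrt{vN}+35(v+1)$ from $N\ge300(v+1)$. What the paper's route buys is a standalone linear bound on $g$, which it reuses in Proposition~\ref{prop:geoendpoint}. Your argument should transfer there just as well, with the piles placed at distance $r$.
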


\begin{corollary}[Normalized case]\label{cor:normalized}
For $\gamma=0$, $v=\ub$ and
$N-\rho(N)\le2\sqrt{\ub\,N}+O_{\ub}(1)$.
\end{corollary}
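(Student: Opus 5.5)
The plan is to prove non-exclusion directly. For each large $N$ I will produce a span $D\ge N-2\sqrt{vN}-O_v(1)$ and an explicit profile $n\in\N_{\ge1}^D$ with $\sum_k n_k=N$ and $\eLB(n)\le-\ub N\le\Vref(N)$. Such a $D$ is not excluded, so $\rho(N)\ge D$. The relaxation enters only through validity tested on the witnesses of Definition~\ref{def:witness}. Everything else uses $\kappa\ge0$, so every cross-coupling term $-\kappa(j-i)n_in_j$ may be dropped when bounding $\eLB$ from above.

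\emph{Step 1 (kernel and layer-cost control from validity).} Applying validity to (W1) gives $2\gamma-\kappa(1)\le-1$, i.e. $\kappa(1)\ge\kappa_0=1+2\gamma>0$; this is where $\gamma>-\tfrac12$ enters. Applying validity to the (W2) witness with arm $n$ gives the block bound
\[
B_n:=(2n-1)\gamma+g(n)-\Sigma_n\le-(3n-2),
\]
where $\Sigma_n\ge0$ is the total coupling inside that profile. I will never evaluate $g(n)$ or the long-range values of $\kappa$ separately. The idea is to reuse the block $B_n$ verbatim as a sub-profile, since $\eLB$ of a concatenation equals the sum of the blocks' $\eLB$ minus nonnegative cross terms.

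\emph{Step 2 (test profiles).} The test profile concatenates $k$ copies of the arm-$n$ witness block. These are followed by a run of $r$ singleton layers, with $r$ chosen so that the atom count is exactly $N=k(3n-1)+r$. The span is $D=2nk+r$, so the deficit is $N-D=k(n-1)$. Dropping all cross terms except the nearest-neighbour ones along the singleton run and at the junctions, and using $\kappa(1)\ge\kappa_0$ there, I expect
\[
\eLB\le -k(3n-2)+r\gamma-(r-1)\kappa_0-(\text{junction terms}).
\]
In the regime $\gamma\ge0$, the singleton run contributes at most $-(r-1)(1+\gamma)$, and the blocks contribute about $-1$ per atom. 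To reach $-\ub N$ one must therefore pay for the per-atom excess $w=\ub+\max(\gamma,0)$ through the additional nearest-neighbour gain $-\kappa_0\cdot(\text{arm size})$ that a big layer of size $n$ collects from its two singleton neighbours.

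The key computation is to rewrite the target inequality as a quadratic constraint in $(k,n)$. It should read essentially $\kappa_0\,k\,n^2\gtrsim wN$, or equivalently a condition on $kn$ and $n$. Then I minimise the deficit $k(n-1)$ subject to it, choosing $n\approx\sqrt{N/v}$ and $k$ of order $1$ or $n$ of order $\sqrt{vN}$ (whichever the bookkeeping dictates). AM--GM should then produce exactly $2\sqrt{vN}$, with rounding of $n,k$ to integers and the boundary junction terms absorbed into $O_v(1)$. The cases $\gamma<0$ and $\gamma\ge0$ are treated separately only in the sign of the $r\gamma$ term, which is why $\max(\gamma,0)$ appears in $w$.

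\emph{Main obstacle.} The delicate step is Step~2's bookkeeping. It must choose the block geometry so that the only unknown quantities, namely $g(n)$ and the long-range kernel values, are packaged entirely inside the witness bound $B_n$, while the quadratic gain comes from $\kappa(1)\ge\kappa_0$ alone. If the plain concatenation does not yield a quadratic gain, I will first try a variant: attach the singleton run on both sides of a single arm-$n$ block, and use validity of (W2) together with the two-sided $\kappa(1)n$ couplings to extract $\kappa_0 n$ per unit of arm. I would then redo the optimisation. Finally, the corollary is immediate by setting $\gamma=0$, which gives $\kappa_0=1$ and $v=\ub$.
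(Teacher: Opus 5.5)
Your proposal has a genuine gap: the test profiles you build contain no term quadratic in the excess, so they can never reach $-\ub N$ in the only case where something has to be proved. The (W2) inequality $(2n-1)\gamma+g(n)-\Sigma_n\le-(3n-2)$ is linear in $n$. It already contains every coupling inside the block, including the two couplings $\kappa(1)\,n$ between the arm layer and its singleton neighbours. Using them again, as in your fallback variant, is double counting, and they are linear in $n$ anyway. Between blocks, the only couplings you can bound from below are the $\kappa(1)$ junctions of singleton layers, because arm layers of different blocks are $2n$ apart, where $\kappa$ may vanish.

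So the best your bookkeeping gives at $\gamma=0$ is $\eLB\le-k(3n-2)-(k+r-1)\kappa(1)$. Since $\kappa(1)\ge1$, this is at least $-(N-1)\kappa(1)\ge-B_N$, with $B_N=\sum_{d<N}(N-d)\kappa(d)$ the all-ones coupling (not your block quantity). If $D=N$ is not excluded there is nothing to prove. If it is excluded, then $-B_N>-u_NN$, so your profiles never certify non-exclusion when it is needed.

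A concrete instance: $\kappa(1)=1$, $\kappa(d)=0$ for $d\ge2$, $g(n)=n-1$. This is valid, with equality, on the system consisting of the (W1) and (W2) witnesses alone. Take $u_N\equiv2$. Every concatenation of witness blocks and a singleton run then has $\eLB=-(N-1)>-2N$. By contrast, two adjacent piles $1+a$, $1+b$ in an all-ones background give $-(N-1)-ab$, and the true deficit is about $2\sqrt N$. The constraint $\kappa_0kn^2\gtrsim wN$ has no source in your profiles; if it did, it would give a deficit of $\sqrt{vN}$, not $2\sqrt{vN}$.

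What is missing is the paper's three-step mechanism, and its first two steps are exactly what you planned to avoid.
\begin{enumerate}
\item \emph{Exclusion dichotomy.} If $D=N$ is excluded, then $B_N<(u_N+\gamma)N$. Since the terms with $d\le N/2$ carry weight at least $N/2$, this gives $\sum_{d\le N/2}\kappa(d)<2w$.
\item \emph{Isolating the layer cost.} With this mass bound, the (W2) validity inequality yields $g(n)\le B_{2n}+(n-1)\overline\Phi_{2n}-(3n-2)-(2n-1)\gamma\le8wn$ for $2\le n\le N/4$. Before the kernel mass is controlled, $g(n)$ is bounded only by the uncontrolled $\Sigma_n$. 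Without isolating $g(n)$ you cannot put two large layers next to each other, and no witness block contains such a pair.
\item \emph{Two-pile profile.} Take the profile of span $N-e$ that is all ones except two adjacent central layers $1+\lfloor e/2\rfloor$ and $1+\lceil e/2\rceil$. Their mutual coupling $-\kappa(1)\lfloor e/2\rfloor\lceil e/2\rceil\le-\kappa_0(e^2-1)/4$ is the quadratic gain. Hence $\eLB(P)\le N\max(\gamma,0)+8w(e+2)-\kappa_0(e^2-1)/4\le-\ub N$ once $e=2\sqrt{vN}+O_v(1)$.
\end{enumerate}
At $\gamma=0$ one has $\kappa_0=1$ and $w=v=\ub$, which is the corollary.
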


\begin{proposition}[Explicit form]\label{prop:explicit}
In the setting of Theorem~\ref{thm:main},
$N-\rho(N)\le2\sqrt{v\,N}+35(v+1)$ for all $N\ge300\,(v+1)$, and
$N-\rho(N)\le300(v+1)$ trivially below that size. The constants are crude
and carry no significance beyond making the statement fully explicit; no
attempt has been made to optimize them.
\end{proposition}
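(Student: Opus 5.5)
The plan is to bound $\rho(N)$ from below by a single explicit test profile. Fix $N\ge300(v+1)$, set $k:=\lceil 2\sqrt{vN}\rceil+c$ for a small absolute constant $c$, and $D:=N-k$. It suffices to show that $D$ is not excluded. Since $-u_NN\ge-\ub N$, this means exhibiting one $n\in\N_{\ge1}^D$ with $\sum_kn_k=N$ and $\eLB(n)\le-\ub N$.

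\textbf{Step 1: information from the witnesses.}
\begin{itemize}
\item[(i)] Validity against (W1) gives $2\gamma-\kappa(1)\le-1$, i.e.\ $\kappa(1)\ge\kappa_0=1+2\gamma>0$.
\item[(ii)] Validity against the (W2) witness with arm $a$ gives
\[
g(a)\le-(3a-2)-(2a-1)\gamma+C_a,
\]
where $C_a$ is the full coupling sum of that witness profile. This is the only control on $g$ at large arguments, and it is available only jointly with the unknown kernel values $\kappa(d)$, $d\ge2$, that enter $C_a$.
\end{itemize}
All further estimates must therefore be comparison estimates: they place a copy of a witness inside the test profile and use $\kappa\ge0$. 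Since all couplings enter with a minus sign, raising populations or adding layers can only increase the coupling sum pair by pair.

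\textbf{Step 2: the test profile.}
\begin{itemize}
\item Take two adjacent heavy layers of populations $a\approx b\approx k/2+1$, so that $(a-1)+(b-1)=k$.
\item Surround them by singleton tails. The left tail has length at least $a-1$ and the right tail at least $b$, so each heavy layer sees, on its far side, the singleton pattern of its own witness. The remaining singletons are appended at the ends.
\end{itemize}
The target estimate is
\[
\eLB(n)\le(D-2)\gamma+g(a)+g(b)-\Bigl[C_a+C_b+\kappa_0(a-1)(b-1)+\kappa_0\cdot\#\{\text{extra adjacent singleton pairs}\}\Bigr]+O(a+b).
\]
Inserting the bound from Step 1(ii), the $-(3a-2)$ terms pay for the $\gamma$-charges of the witness singletons. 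Each additional singleton costs at most $\gamma$ and gains at least $\kappa_0$ from its adjacent pair, hence costs at most $\gamma-\kappa_0=-1-\gamma<0$ per layer. A bookkeeping of this kind would give
\[
\eLB(n)\le \max(\gamma,0)N-\kappa_0\frac{k^2}{4}+O\bigl((1+|\gamma|)k\bigr).
\]
This is at most $-\ub N$ as soon as $\kappa_0k^2/4\ge wN+O(k)$, i.e.\ $k\ge2\sqrt{vN}+O_v(1)$. Solving this quadratic inequality with crude but explicit constants, and using $N\ge300(v+1)$ to absorb the linear term, yields the additive $35(v+1)$. Below $300(v+1)$ the bound $N-\rho(N)\le N$ is trivial.

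\textbf{Main obstacle: Step 2.} Pairs of layers may be counted by both embedded witnesses: the arm--arm pair, and singleton pairs lying in the overlap of the two witness windows. In the test profile such a pair carries weight $n_in_j$, whereas the two witnesses together demand up to $2$ (or $a+b$ for the arm--arm pair).
\begin{itemize}
\item For the arm--arm pair this is harmless, since $ab\ge a+b-1+(a-1)(b-1)$. The quadratic term is precisely this surplus.
\item For overlapping singleton pairs the surplus is missing. Here I would first try to shift one witness window so that the two windows meet only in the heavy pair. This requires extending the outer tails, which costs only $O(a+b)$ further singletons, each of nonpositive net cost.
\item If overlaps cannot be fully avoided, I would bound the doubly counted singleton couplings by $\sum_{d}\kappa(d)$ over the overlap. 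That sum is itself controlled by applying (W2) validity once more, comparing with the singleton chain of length $2a$, and it enters only at order $O(k)$.
\end{itemize}
Verifying that this overlap loss is truly $O(k)$, rather than $O(k^2)$ which would destroy the constant $2\sqrt v$, is the step I expect to need the most care.
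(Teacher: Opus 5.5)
Your architecture (two adjacent piles in a singleton background, (W1) for $\kappa(1)\ge\kappa_0$, (W2) for $g$) is viable. As written, however, the proof has a gap exactly at the step you flag, and neither remedy you offer closes it.

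\emph{The window shift is impossible.} The (W2) window of the pile at $p$ extends $a-1\ge1$ and $a$ layers on its two sides, so it contains $p+1$; likewise the $b$-window contains $p$. The two windows must therefore share a stretch of length about $a+b$. Pairs $(p,j)$ with $j$ in both windows are then demanded with weight $a+1$ but supplied only $a$.

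\emph{The fallback fails structurally.} If $(g,\kappa)$ is valid, so is $(g,\kappa')$ for every $\kappa'\ge\kappa$. Hence no validity test, (W2) or otherwise, can bound $\sum_d\kappa(d)$ from above. Your premise that (W2) is ``the only control on $g$'' misses the paper's key idea: kernel mass is bounded by \emph{exclusion}, not validity. Either $D=N$ is not excluded and the deficit is $0$, or \eqref{eq:K} gives $\sum_{d\le N/2}\kappa(d)<2w$. That turns Lemma~\ref{lem:intra} into $g(n)\le8wn$ for $2\le n\le N/4$ (Lemma~\ref{lem:linceiling}). The paper then drops every coupling except the pile--pile term by sign, so witness couplings never need to be matched. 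With that mass bound your $O(k)$ overlap estimate would also be legitimate.

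Your own route can in fact be completed without the dichotomy and with no overlap loss, provided you stop embedding windows literally. Write $n_i=1+x_i$ with $x_p=a-1$, $x_{p+1}=b-1$ and $x_i=0$ otherwise, where $a,b\ge2$. Then exactly
\[
\sum_{i<j}\kappa(j-i)\,n_in_j=B_D+(a-1)\Phi_D(p)+(b-1)\Phi_D(p+1)+\kappa(1)(a-1)(b-1),
\]
while the (W2) coupling sum is $C_a=B_{2a}+(a-1)\Phi_{2a}(a)$. The all-ones part is position-free. Since $D-d\ge(2a-d)_++(2b-d)_+$ for all $d\ge1$ once $D\ge2a+2b$, we get $B_D\ge B_{2a}+B_{2b}$. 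For $p=\lfloor D/2\rfloor$ one also has $\Phi_D(p)\ge\Phi_{2a}(a)$ and $\Phi_D(p+1)\ge\Phi_{2b}(b)$. Combined with Lemmas~\ref{lem:intra} and~\ref{lem:unit}, this gives
\[
\eLB(P)\le(D-2a-2b)\gamma-3(a+b)+4-\kappa_0(a-1)(b-1)\le N\max(\gamma,0)-\kappa_0(a-1)(b-1).
\]
So $(a-1)(b-1)\ge vN$ suffices, which yields $N-\rho(N)\le2\sqrt{vN}+2$ for $N\ge300(v+1)$, sharper than the stated bound. Keeping the distance-$1$ surplus $(D-2a-2b+1)\kappa(1)$ of $B_D-B_{2a}-B_{2b}$ realizes your ``$\gamma-\kappa_0<0$ per extra singleton''. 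The paper's route is cruder in its constants but modular: once kernel mass is bounded, $g$ has a profile-independent linear ceiling and all couplings but one can be discarded.
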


\begin{remark}[Behaviour of the constant]\label{rem:constant}
The map $\gamma\mapsto v$ is decreasing on $\gamma\ge0$ when
$\ub\ge\tfrac12$, and in all cases $v\le\max(\ub,\tfrac12)$ there: a
positive singleton cost never weakens the ceiling beyond that value. On the
other side, $v\sim\ub/(1+2\gamma)$, so the leading coefficient
$2\sqrt v$ in our bound diverges like $2\sqrt{\ub/(1+2\gamma)}$ as
$\gamma\downarrow-\tfrac12$. The endpoint counterexample of
Section~\ref{sec:threshold} shows that some deterioration is unavoidable,
the conclusion failing outright at the threshold in the abstract class
(Proposition~\ref{prop:essential}); the failure does not extend to the
geometric Lennard--Jones system at the endpoint
(Proposition~\ref{prop:geoendpoint}), and whether the rate
$2\sqrt{\ub/(1+2\gamma)}$ is optimal above the threshold is not
addressed here.
\end{remark}

\begin{remark}[No soundness assumption]\label{rem:nosound}
The reference is arbitrary: nothing requires $\Vref(N)$ to bound the true
minimum of $E$, and no condition of the form
$\inf_{\mathcal X_N}E\le-\ub N$ is imposed. The ceiling is a statement
about the formal output of the exclusion procedure and therefore also
holds for unsound references. Soundness matters only when the relaxation
span is interpreted as a certificate. Likewise no stability of the energy
is assumed: boundedness of $(u_N)$ is an axiom here, and a theorem in the
Lennard--Jones instance (Remark~\ref{rem:stability}).
\end{remark}

The proof occupies the rest of this section. Two lemmas extract from
validity, tested against the witnesses, everything the argument needs; an
exclusion dichotomy replaces every summability hypothesis; and an explicit
two-pile profile produces the crossing.

\begin{lemma}[Linear intra-layer ceiling]\label{lem:intra}
Let $(g,\kappa)$ be valid on a system satisfying (W2). Then for every
$n\ge2$, with $m=2n$ and $c=n$,
\[
g(n)\;\le\;B_m+(n-1)\,\Phi_m(c)-(3n-2)-(2n-1)\gamma
\;\le\;B_m+(n-1)\,\overline\Phi_m-(3n-2)-(2n-1)\gamma,
\]
where $B_m=\sum_{d=1}^{m-1}(m-d)\kappa(d)$ and
$\Phi_m(c)=\sum_{j\ne c}\kappa(|c-j|)
\le\overline\Phi_m=2\sum_{d<m}\kappa(d)$.
\end{lemma}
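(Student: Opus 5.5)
The plan is to test validity against the single witness configuration supplied by (W2) and rearrange. No other ingredient is needed, since $\kappa\ge0$ handles the second inequality.

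Fix $n\ge2$, set $m=2n$ and $c=n$, and let $X$ be the configuration from (W2). Its profile has $n_c=n$ and $n_k=1$ for the other $2n-1$ indices $k$, and $E(X)\le-(3n-2)$. First I evaluate $\eLB(n(X))$ from \eqref{eq:elb}.

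\textbf{Separable part.} This equals $g(n)+(2n-1)g(1)=g(n)+(2n-1)\gamma$.

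\textbf{Coupling part.} I split the pairs $i<j$ into two kinds.
\begin{itemize}
\item Pairs avoiding $c$ have product $n_in_j=1$.
\item Pairs containing $c$ have product $n$. I write this as $n=1+(n-1)$, so that each such pair is counted once with weight $1$ and once more with weight $n-1$.
\end{itemize}
Collecting the unit weights over all pairs gives $\sum_{1\le i<j\le m}\kappa(j-i)$. Exactly $m-d$ pairs are at distance $d$, so this sum is $B_m=\sum_{d=1}^{m-1}(m-d)\kappa(d)$. The extra weights give $(n-1)\sum_{j\ne c}\kappa(|c-j|)=(n-1)\Phi_m(c)$. Hence
\[
\eLB(n(X))=g(n)+(2n-1)\gamma-B_m-(n-1)\Phi_m(c).
\]

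\textbf{Validity.} Validity gives $\eLB(n(X))\le E(X)\le-(3n-2)$. Solving for $g(n)$ yields the first inequality of the lemma.

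\textbf{Second inequality.} Since $n-1\ge0$, it suffices to show $\Phi_m(c)\le\overline\Phi_m$. The distances $|c-j|$ for $j\ne c$, $1\le j\le m$, lie in $\{1,\dots,m-1\}$. Each distance $d$ is attained by at most two indices, namely $j=c\pm d$. Because $\kappa\ge0$, dropping the constraint that $c\pm d$ lies in range only increases the sum, so $\Phi_m(c)\le2\sum_{d<m}\kappa(d)=\overline\Phi_m$.

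There is no real obstacle in this argument. The only point needing care is the bookkeeping of the cross terms: the decomposition $n=1+(n-1)$ on pairs containing $c$ must be done correctly so that the all-ones contribution is exactly $B_m$, and no pair is double counted.
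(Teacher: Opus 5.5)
Your proposal is correct and follows the paper's proof: you test validity at the (W2) witness, split the pile's products as $n=1+(n-1)$ to obtain the coupling term $B_m+(n-1)\Phi_m(c)$, and rearrange. You also justify $\Phi_m(c)\le\overline\Phi_m$ explicitly (each distance occurs at most twice and $\kappa\ge0$), a step the paper leaves implicit.
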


\begin{proof}
Let $X$ be the witness of (W2). Writing $n_c=1+(n-1)$, the bilinear term
of \eqref{eq:elb} at its profile splits as
\[
\sum_{i<j}\kappa(j-i)\,n_in_j
=\sum_{i<j}\kappa(j-i)+(n-1)\sum_{j\ne c}\kappa(|c-j|)
=B_m+(n-1)\Phi_m(c),
\]
and the $m-1=2n-1$ singleton layers contribute $(2n-1)\gamma$, so
$\eLB(n(X))=(2n-1)\gamma+g(n)-B_m-(n-1)\Phi_m(c)$. Validity against
$E(X)\le-(3n-2)$ rearranges to the display.
\end{proof}

\begin{lemma}[Nearest-neighbour weight]\label{lem:unit}
Let $(g,\kappa)$ be valid on a system satisfying (W1). Then
$\kappa(1)\ge1+2\gamma=\kappa_0$; in particular $\kappa(1)>0$ whenever
$\gamma>-\tfrac12$, and $\kappa(1)\ge1$ in the normalized case.
\end{lemma}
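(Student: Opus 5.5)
The plan is to test validity against the single witness configuration supplied by (W1), exactly as Lemma~\ref{lem:intra} tests it against (W2). Let $X$ be a configuration with profile $n(X)=(1,1)$ and $E(X)\le-1$. Its profile span is $D=2$ and both populations equal $1$. By \eqref{eq:elb} the relaxed value at this profile therefore has only two pieces. The two singleton layers contribute $g(1)+g(1)=2\gamma$. The only pair is $(i,j)=(1,2)$, with gap $1$, and it contributes $-\kappa(1)\cdot1\cdot1$. Hence
\[
\eLB(n(X))=2\gamma-\kappa(1).
\]

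Validity gives $E(X)\ge\eLB(n(X))$. Combining this with $E(X)\le-1$ yields $-1\ge 2\gamma-\kappa(1)$, which rearranges to
\[
\kappa(1)\ge1+2\gamma=\kappa_0.
\]
The two special cases follow at once. If $\gamma>-\tfrac12$, then $\kappa_0>0$, so $\kappa(1)>0$. If the relaxation is normalized, then $\gamma=0$ and the bound reads $\kappa(1)\ge1$.

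There is no real obstacle: the proof is a single substitution of the (W1) witness into \eqref{eq:elb}. The only point needing care is that the profile of the witness is exactly $(1,1)$, so no $g(n)$ with $n\ge2$ appears and no larger-gap values $\kappa(d)$, $d\ge2$, enter. The resulting inequality therefore isolates $\kappa(1)$ and $\gamma$ alone.
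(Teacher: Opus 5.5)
Your proposal is correct and is the paper's own argument: evaluate \eqref{eq:elb} at the (W1) witness to get $\eLB=2\gamma-\kappa(1)$, then combine validity with $E\le-1$ and rearrange to $\kappa(1)\ge1+2\gamma$. Your additional observations (that only $g(1)$ and $\kappa(1)$ enter, and the two special cases) are accurate.
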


\begin{proof}
At the witness of (W1), $\eLB=2\gamma-\kappa(1)$ and $E\le-1$; validity
rearranges to the claim.
\end{proof}

\paragraph{The exclusion dichotomy.}
Fix a size $N$. If the span $D=N$ is not excluded then $\rho(N)=N$ and the
deficit vanishes. Otherwise the exclusion inequality applies to the unique
profile at $D=N$, the all-ones profile, for which
$\eLB=N\gamma-B_N$; exclusion means $N\gamma-B_N>-u_N\,N$, that is
\begin{equation}\label{eq:K}
B_N\;<\;(u_N+\gamma)\,N .
\tag{K}
\end{equation}
Since $B_N\ge0$, exclusion of the full span forces $u_N+\gamma>0$; and
since each term of $B_N$ with $d\le N/2$ carries the weight $N-d\ge N/2$,
\begin{equation}\label{eq:mass}
\sum_{d\le N/2}\kappa(d)\;<\;2(u_N+\gamma)\;\le\;2w,
\end{equation}
the last step because $u_N+\gamma\le\ub+\gamma= w$ for $\gamma\ge0$ and
$u_N+\gamma<u_N\le\ub=w$ for $\gamma<0$. Summability of the kernel is
thus not assumed: bounded mass on the relevant range is what exclusion of
the full span costs. Both steps use only $\kappa\ge0$ and the definition
of the procedure; no witness and no energy.

\begin{lemma}[Linear ceiling under exclusion]\label{lem:linceiling}
Let $\gamma>-\tfrac12$, and suppose the span $D=N$ is excluded at size
$N$ on a system satisfying (W2). Then $g(n)\le8w\,n$ for every
$2\le n\le N/4$.
\end{lemma}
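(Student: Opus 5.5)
The plan is to combine Lemma~\ref{lem:intra} with the mass bound \eqref{eq:mass} obtained from exclusion of the full span. Fix $2\le n\le N/4$, and put $m=2n$ and $c=n$. Then every distance $d<m$ satisfies $d\le 2n-1<N/2$. So every kernel value that enters $B_m$ and $\overline\Phi_m$ lies in the range controlled by \eqref{eq:mass}, which gives
\[
\sum_{d<m}\kappa(d)\;\le\;\sum_{d\le N/2}\kappa(d)\;<\;2w .
\]
This is why the hypothesis $n\le N/4$ appears: it makes the witness of (W2) short enough for its pair distances to fall in the window where exclusion caps the kernel mass.

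Next I bound the two kernel quantities of Lemma~\ref{lem:intra}. For $B_m$, the weights satisfy $m-d\le m=2n$, so $B_m\le 2n\sum_{d<m}\kappa(d)<4wn$. For $\overline\Phi_m$ the definition gives $\overline\Phi_m=2\sum_{d<m}\kappa(d)<4w$, and therefore $(n-1)\overline\Phi_m<4w(n-1)$. Substituting into Lemma~\ref{lem:intra} yields
\[
g(n)\;<\;4wn+4w(n-1)-(3n-2)-(2n-1)\gamma .
\]

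The remaining step is to absorb the $\gamma$-dependent tail, and this is the one place needing care. If $\gamma\ge0$, the term $-(2n-1)\gamma$ is nonpositive and $-(3n-2)<0$ for $n\ge2$, so $g(n)<8wn$ immediately. If $-\tfrac12<\gamma<0$, then $-(2n-1)\gamma<(2n-1)/2$, so
\[
-(3n-2)-(2n-1)\gamma\;<\;-(3n-2)+n-\tfrac12\;=\;-2n+\tfrac32\;<\;0
\]
for $n\ge2$. In both cases the tail is negative and $g(n)\le 8wn$ follows. The threshold $\gamma>-\tfrac12$ is used exactly here: it keeps the singleton contribution of the $2n-1$ unit layers from overwhelming the witness energy $-(3n-2)$.

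A degenerate case needs a remark. Exclusion forces $u_N+\gamma>0$, so $w>0$ and the bound is meaningful. Even if $w$ were $0$, the strict inequality \eqref{eq:mass} would be violated and the hypothesis would be vacuous. I expect no real obstacle: the only points to check are the distance-range inclusion $2n-1\le N/2$ and the sign of the $\gamma$-tail.
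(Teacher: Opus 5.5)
Your proof is correct and follows the paper's argument essentially line by line. The inclusion $2n-1<N/2$ feeds the mass bound \eqref{eq:mass} into Lemma~\ref{lem:intra}, giving $\overline\Phi_m\le4w$ and $B_m\le2wm$, and the $\gamma$-tail is absorbed by the same case split on the sign of $\gamma$. Your remark that $w>0$, needed to drop the leftover $-4w$ term, makes explicit a step the paper leaves implicit. That remark can be simplified: $w\ge0$ already follows from the definition $w=\ub+\max(\gamma,0)$, since $u_N\ge0$.
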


\begin{proof}
Let $m=2n\le N/2$. By \eqref{eq:mass},
$\overline\Phi_m=2\sum_{d<m}\kappa(d)\le4w$ and
$B_m\le m\sum_{d<m}\kappa(d)\le2wm$. Lemma~\ref{lem:intra} gives
\[
g(n)\le(n-1)\cdot4w+2w\cdot2n-(3n-2)-(2n-1)\gamma
=8wn-4w-3n+2-(2n-1)\gamma .
\]
For $\gamma\ge0$ the last term is nonpositive and $-3n+2<0$, so
$g(n)\le8wn$. For $-\tfrac12<\gamma<0$,
$-(2n-1)\gamma=(2n-1)|\gamma|<(2n-1)/2\le n$, so
$g(n)\le8wn-3n+2+n=8wn-2n+2\le8wn$ for $n\ge1$.
\end{proof}

\begin{proof}[Proof of Proposition~\ref{prop:explicit}, hence of
Theorem~\ref{thm:main} and Corollary~\ref{cor:normalized}]
Fix $N\ge300(v+1)$ and suppose $D=N$ is excluded, the other branch of the
dichotomy giving deficit zero. Set
\[
e:=\bigl\lceil2\sqrt{v\,N+1}\bigr\rceil+\lceil34v\rceil+5,
\qquad D:=N-e,
\]
and let $P$ be the profile on $D$ layers consisting of all ones except two
adjacent central layers of populations $1+\lfloor e/2\rfloor$ and
$1+\lceil e/2\rceil$. Since $v+1\le N/300$, we have
$e\le2\sqrt{v\,N}+34v+9\le\tfrac{2}{\sqrt{300}}N+\tfrac{34}{300}N+9
\le0.23\,N+9\le N/2-4$, so $D\ge2$, the profile is admissible, and both
pile populations lie in $[2,N/4]$, the range of
Lemma~\ref{lem:linceiling}.

The singleton layers of $P$ contribute
$(D-2)\gamma\le N\max(\gamma,0)$. Dropping the background and
pile-to-background terms of \eqref{eq:elb}, all nonpositive because
$\kappa\ge0$, and keeping the single pile-to-pile term at distance $1$,
\[
\eLB(P)\;\le\;N\max(\gamma,0)
+g\Bigl(1+\Bigl\lfloor\frac e2\Bigr\rfloor\Bigr)
+g\Bigl(1+\Bigl\lceil\frac e2\Bigr\rceil\Bigr)
-\kappa(1)\Bigl\lfloor\frac e2\Bigr\rfloor\Bigl\lceil\frac e2\Bigr\rceil,
\]
which, by Lemma~\ref{lem:linceiling}, Lemma~\ref{lem:unit}
($\kappa(1)\ge\kappa_0>0$) and
$\lfloor e/2\rfloor\lceil e/2\rceil\ge(e^2-1)/4$, is at most
\[
N\max(\gamma,0)+8w(e+2)-\kappa_0\,\frac{e^2-1}4
\;=\;N\max(\gamma,0)-\kappa_0\Bigl[\frac{e^2-1}4-8v(e+2)\Bigr],
\]
using $w=\kappa_0v$. Set $a:=2\sqrt{v\,N+1}$ and $b:=34v+5$, so that
$a+b\le e\le a+b+2$. Then
\[
\frac{e^2}4\;\ge\;\frac{(a+b)^2}4
\;=\;(v\,N+1)+\Bigl(17v+\tfrac52\Bigr)a+\frac{b^2}4,
\qquad
8v(e+2)\;\le\;8va+8vb+32v,
\]
and since $b/4-8v=v/2+5/4$,
\[
\frac{e^2-1}4-8v(e+2)
\;\ge\;v\,N+\tfrac34-32v+\Bigl(9v+\tfrac52\Bigr)a
+\Bigl(\frac v2+\frac54\Bigr)b
\;\ge\;v\,N+17v^2+13v+7\;\ge\;v\,N,
\]
using $a\ge0$ and $(\tfrac v2+\tfrac54)(34v+5)=17v^2+45v+\tfrac{25}4$.
Therefore
\[
\eLB(P)\;\le\;N\max(\gamma,0)-\kappa_0\,v\,N
\;=\;N\max(\gamma,0)-wN
\;=\;-\ub\,N\;\le\;-u_N\,N\;=\;\Vref(N),
\]
the span $D=N-e$ is not excluded, and
\[
N-\rho(N)\;\le\;e\;\le\;2\sqrt{v\,N+1}+34v+7
\;\le\;2\sqrt{v\,N}+35(v+1),
\]
by $\sqrt{x+1}\le\sqrt x+1$. For $N<300(v+1)$ the trivial bound
$N-\rho(N)\le N\le300(v+1)$ holds.
\end{proof}

\section{Instances}\label{sec:instances}

\subsection{Lennard--Jones clusters}\label{sec:lj}

The originating instance is geometric. Configurations are finite subsets
of $\R^3$; a configuration is oriented so that a diameter lies along the
$x$-axis and space is cut into unit layers $L_k=\{k-1\le x<k\}$; the
system consists of the fully spanning configurations, those occupying $D$
consecutive nonempty layers, for which the $x$-extent equals the diameter
and $D-2<\diam X<D$ \cite{ks2025}; the energy is
$E(X)=\sum_{i<j}V_{\mathrm{LJ}}(|x_i-x_j|)$,
$V_{\mathrm{LJ}}(r)=r^{-12}-2r^{-6}$, in units of the equilibrium
distance and well depth.

\begin{proposition}[Lennard--Jones witnesses]\label{prop:lj}
The oriented Lennard--Jones layer system satisfies (W1) and (W2).
Consequently Theorem~\ref{thm:main} applies formally to every valid
separable bilinear layer relaxation of the Lennard--Jones layer system
with singleton cost above the threshold, for every bounded reference
sequence. Its interpretation as a certified diameter bound additionally
requires a sound reference and the structural full-occupancy property of
minimizers established in \cite{ks2025}, together with the span-diameter
relation $D-2<\diam X<D$ recalled above.
\end{proposition}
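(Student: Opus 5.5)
To verify (W1) and (W2) I would give explicit configurations. Each is built from unit bonds placed along coordinate directions. Every interatomic distance in them will be at least $1$, so every pair contributes a nonpositive energy, since $V_{\mathrm{LJ}}(r)<0$ for all $r>2^{-1/6}$. I can then bound the total energy above by $-1$ times the number of pairs at exact distance $1$, where $V_{\mathrm{LJ}}(1)=-1$. For each configuration I also have to check three geometric conditions: its diameter lies along the $x$-axis, it occupies the $D$ consecutive unit layers with the prescribed populations, and $D-2<\diam X<D$.

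\textbf{(W1).} I would place two atoms at $(\tfrac12,0,0)$ and $(\tfrac32,0,0)$. They lie in $L_1$ and $L_2$, so the profile is $(1,1)$ and $D=2$. The diameter is $1$, directed along the $x$-axis, and $0<1<2$. The energy is exactly $V_{\mathrm{LJ}}(1)=-1$.

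\textbf{(W2).} Fix $n\ge2$ and put $m=2n$. The configuration is a chain with an arm.
\begin{itemize}
\item \emph{Chain.} Place atoms $p_k=(k-\tfrac12,0,0)$ for $k=1,\dots,2n$, one in each layer $L_k$.
\item \emph{Arm.} Attach $n-1$ further atoms $q_j=(n-\tfrac12,\,j,\,0)$ for $j=1,\dots,n-1$. These have the same $x$-coordinate as $p_n$, so they also lie in $L_n$.
\end{itemize}
The profile is therefore all-ones except $n_c=n$ at $c=n$, as (W2) requires.

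Next I check the distances. Distinct atoms are at mutual distance at least $1$: all positions lie on the integer-offset lattice $(\tfrac12,0,0)+\mathbb Z^3$ and are distinct. The following pairs are at exact distance $1$:
\begin{itemize}
\item the $2n-1$ consecutive chain pairs $(p_k,p_{k+1})$;
\item the pair $(p_n,q_1)$ and the $n-2$ consecutive arm pairs $(q_j,q_{j+1})$, which are $n-1$ pairs in total.
\end{itemize}
This gives $3n-2$ unit bonds. All other pairs contribute nonpositively, so $E\le-(3n-2)$.

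The step I expect to need the most care is the diameter condition: the arm must not create a pair farther apart than the chain endpoints.
\begin{itemize}
\item The chain endpoints are at distance $2n-1$ along $x$.
\item An arm atom $q_j$ and a chain atom $p_k$ are at distance $\sqrt{(k-n)^2+j^2}\le\sqrt{n^2+(n-1)^2}$.
\item Two arm atoms are at distance at most $n-2$.
\end{itemize}
Squaring the middle bound, $n^2+(n-1)^2<(2n-1)^2$ is equivalent to $0<2n^2-2n$, which holds for $n\ge2$. Hence the diameter is realized by $p_1p_{2n}$ along the $x$-axis, and the $x$-extent equals the diameter. This diameter $2n-1$ satisfies $2n-2<2n-1<2n=D$, and all $D$ layers are nonempty, so the configuration belongs to the oriented system.

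\textbf{Consequences.} The formal application of Theorem~\ref{thm:main} follows at once, because the theorem uses only (W1), (W2), validity and boundedness of $(u_N)$. The interpretive clause about certified diameter bounds is a restatement of the hypotheses from \cite{ks2025} quoted in the proposition and needs no separate proof.
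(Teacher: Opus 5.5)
Your proposal is correct and essentially reproduces the paper's proof: the same chain-with-arm configuration, the same diameter comparison $n^2+(n-1)^2<(2n-1)^2$, and the same count of $3n-2$ unit bonds. The differences are cosmetic. You place the (W1) pair at half-integer abscissae instead of $x=0,1$, and you get nonpositivity of all other pairs from the lattice $(\tfrac12,0,0)+\mathbb Z^3$ rather than from the paper's split into chain, column and mixed pairs.
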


\begin{proof}
\emph{(W1).} Let $X=\{(0,0,0),(1,0,0)\}$. The diameter is the segment
between the two atoms, of length $1$, along the $x$-axis; the atoms sit in
$L_1$ and $L_2$, so $X$ is fully spanning with profile $(1,1)$, and
$E(X)=V_{\mathrm{LJ}}(1)=-1$.

\emph{(W2).} Fix $n\ge2$, $m=2n$, $c=n$, and take
\[
a_j=\Bigl(j-\tfrac12,\,0,\,0\Bigr)\ (1\le j\le m),
\qquad
b_i=\Bigl(c-\tfrac12,\,i,\,0\Bigr)\ (1\le i\le n-1),
\]
a unit chain along the $x$-axis carrying a transverse arm at layer $c$.
Then $a_j\in L_j$ and $b_i\in L_c$, so the occupied layers are
$L_1,\dots,L_m$, consecutive and nonempty, with the profile of (W2). The
extremal chain pair has $|a_1-a_m|=m-1$; every other pair is shorter,
chain pairs trivially, arm pairs of length at most $n-2$, and mixed pairs
satisfying $|b_i-a_j|^2=(c-j)^2+i^2\le(m/2)^2+(n-1)^2\le(m-1)^2$, because
$n-1\le m/2-1$ gives $(m/2)^2+(m/2-1)^2=m^2/2-m+1\le m^2-2m+1$ for
$m\ge2$. Hence $\diam X=m-1$ along the $x$-axis and $X$ is fully spanning
with $D=m$. For the energy: chain pairs contribute
$\sum_{d=1}^{m-1}(m-d)V_{\mathrm{LJ}}(d)\le(m-1)V_{\mathrm{LJ}}(1)=-(m-1)$,
since $V_{\mathrm{LJ}}(d)<0$ for every integer $d\ge1$; the column of
layer $c$, namely $a_c$ with the arm, consists of $n$ collinear atoms at
unit spacing, contributing at most $-(n-1)$; and every remaining pair is
mixed with $j\ne c$, at distance $\sqrt{(c-j)^2+i^2}\ge\sqrt2>2^{-1/6}$,
contributing a negative amount. The three classes of pairs partition the
pair set, so no pair is counted twice, and $E(X)\le-(m-1)-(n-1)=-(3n-2)$.
\end{proof}

\begin{remark}[Stability and the constant]\label{rem:stability}
For a sound Lennard--Jones reference, $\Vref(N)\ge V^*_N\ge-\Cstab N$ by
stability of the potential \cite{ruelle}, so $\ub\le\Cstab$ and the
ceiling constant is absolute for normalized bounds.
\end{remark}

\begin{remark}[Sharpness of the two tests]\label{rem:sharp}
Lemma~\ref{lem:unit} is sharp: a normalized relaxation with
$\kappa(1)=1$, such as the conservative Lennard--Jones layer kernel
$\kappa(1)=1$, $\kappa(d)=-V_{\mathrm{LJ}}(d-1)$ for $d\ge2$, attains it
with equality, and the counterexample of Section~\ref{sec:threshold}
attains it with equality at the threshold. The chain-with-arm witness of
(W2) is the degenerate limit of the pile profiles that drive the actual
crossings of the implemented bounds \cite{ks2025}.
\end{remark}

\subsection{Other pair potentials}\label{sec:otherpair}

The verification above used exactly two properties of
$V_{\mathrm{LJ}}$: the unit well $V(1)=-1$, and $V\le0$ at every
non-bonded witness distance, all of which are integers $d\ge2$ or of the
form $\sqrt{(c-j)^2+i^2}\ge\sqrt2$.

\begin{proposition}[Pair-potential instances]\label{prop:pairs}
Let $V$ be a pair potential, unit-normalized, with $V(1)=-1$ and
$V(r)\le0$ for all $r\ge\sqrt2$. Then the oriented layer system of $V$
satisfies (W1) and (W2), and Theorem~\ref{thm:main} applies to every valid
separable bilinear layer relaxation of its layer system with singleton
cost above the threshold and bounded reference. This covers the normalized
Morse potential at every stiffness $a>0$, for which
$V(r)=e^{-2a(r-1)}-2e^{-a(r-1)}=e^{-a(r-1)}\bigl(e^{-a(r-1)}-2\bigr)<0$
for all $r\ge1$, and every normalized Mie potential with repulsive
exponent strictly larger than its attractive exponent, which is negative
beyond its minimum; regularized Buckingham-type potentials scaled to
$V(1)=-1$ and satisfying the tail-sign condition are covered likewise,
though the unregularized potential, ill-behaved at short range, is not a
clean instance and nothing here depends on it. The proposition asserts
the witness axioms, not the existence of relaxations: any future valid
KS-type relaxation for these potentials with singleton cost above the
threshold would inherit the ceiling.
\end{proposition}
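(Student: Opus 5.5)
The plan is to repeat the two explicit constructions from the proof of Proposition~\ref{prop:lj}, with $V_{\mathrm{LJ}}$ replaced by $V$. At every step I will check that only the two stated properties, $V(1)=-1$ and $V(r)\le0$ for $r\ge\sqrt2$, are used. The geometric parts of that proof (full spanning, $\diam X=m-1$ along the $x$-axis, the profiles) do not depend on the potential, so they carry over verbatim. Only the energy estimates need to be redone.

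For (W1), I take the same two-point configuration $\{(0,0,0),(1,0,0)\}$. Its profile is $(1,1)$ and its energy is $V(1)=-1$.

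For (W2), I take the same unit chain $a_1,\dots,a_m$ with the transverse arm $b_1,\dots,b_{n-1}$ at layer $c=n$. I partition the pairs into three classes.
\begin{itemize}
\item Chain pairs. There are $m-1$ pairs at distance $1$, each contributing $-1$. All other chain pairs are at integer distance $d\ge2\ge\sqrt2$ and contribute $\le0$. The total is at most $-(m-1)$.
\item The column $\{a_c,b_1,\dots,b_{n-1}\}$. These $n$ collinear atoms at unit spacing give $n-1$ unit bonds, each contributing $-1$, and the remaining pairs are at integer distances $\ge2$. The total is at most $-(n-1)$.
\item Mixed pairs $b_i,a_j$ with $j\ne c$. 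These are at distance $\sqrt{(c-j)^2+i^2}\ge\sqrt2$ and contribute $\le0$.
\end{itemize}
Summing gives $E\le-(3n-2)$. With both witness axioms in hand, Theorem~\ref{thm:main} applies as stated, since it uses nothing about the system beyond (W1), (W2), validity and the bounded reference.

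It remains to check the named families. For Morse, the displayed factorization settles it. For $r\ge1$ we have $e^{-a(r-1)}\in(0,1]$, so the second factor is at most $-1<0$. This gives $V<0$ on $[1,\infty)\supset[\sqrt2,\infty)$, and $V(1)=1-2=-1$.

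For Mie with exponents $p>q>0$, I write the potential normalized to minimum $-1$ at $r=1$:
\[
V(r)=\frac{q\,r^{-p}-p\,r^{-q}}{p-q}.
\]
Then $V(1)=-1$, and $V(r)<0$ if and only if $r^{p-q}>q/p$. This holds for all $r\ge1$ because $q/p<1$.

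For regularized Buckingham-type potentials, the hypotheses are assumed as part of the class, so there is nothing to prove there.

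I expect no genuine obstacle. The only point needing care is that the Mie statement says ``negative beyond its minimum''. So I must check that after normalization the minimum sits at $r=1\le\sqrt2$, so the sign condition holds on $[\sqrt2,\infty)$. The explicit formula above handles this.
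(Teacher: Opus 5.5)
Your proposal is correct and follows the paper's proof. Like the paper, you reuse the witnesses of Proposition~\ref{prop:lj}, whose geometry does not depend on the potential, and redo the energy count: the $3n-2$ unit bonds use only $V(1)=-1$, and every other pair uses only $V\le0$ at the remaining witness distances (integers $d\ge2$ or $\sqrt{(c-j)^2+i^2}\ge\sqrt2$). Your explicit Morse and Mie sign checks just spell out what the paper asserts inline in the statement.
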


\begin{proof}
(W1) is the pair at distance $1$. In (W2) the bonded pairs contribute $-1$
each as before, and every non-bonded pair sits at a distance where
$V\le0$ by hypothesis; the counting is unchanged.
\end{proof}

\begin{remark}[Prospective scope]\label{rem:prospective}
The theorem is prospective as well as retrospective: it constrains not
only the existing Lennard--Jones layer bounds but every future KS-type
construction for any pair potential satisfying the witness conditions.
The logical order matters. The witnesses concern the problem, and their
existence makes the theorem applicable to every valid relaxation of the
class; it does not by itself produce one. Constructing a KS-type
relaxation for Morse, Mie or Buckingham requires verifying that the
transposed per-layer estimate and kernel are a valid lower bound over all
layer configurations, a per-potential verification not carried out here.
Proposition~\ref{prop:pairs} claims exactly the first implication: once
such a relaxation is built and its validity established, the $\sqrt N$
ceiling applies to it before any of its constants are computed.
\end{remark}

\begin{remark}[Beyond pair potentials]\label{rem:beyond}
For many-body energies the axioms must be checked against the witness
geometry, and can fail. The Stillinger--Weber energy is the instructive
case: its pair part qualifies, but the three-body term penalizes the right
angles of the chain-with-arm witness, so (W2) does not follow from the
configuration of Proposition~\ref{prop:lj}, and adapted witnesses,
respecting the preferred angles inside a slab, would be needed. We record
this as open: the theorem applies exactly when the witnesses exist, and
asserting them for a specific many-body model is a model-specific
verification, not a corollary. Conversely, nothing in
Sections~\ref{sec:class} and~\ref{sec:main} is geometric: any problem
whose feasible objects carry layer profiles and admit the two cheap
witness families falls under Theorem~\ref{thm:main}.
\end{remark}

\section{What the theorem does not say}\label{sec:onesided}

The ceiling is one-sided by design. It caps the deficit of every
relaxation with singleton cost above the threshold at $O(\sqrt N)$; it
does not assert that any given relaxation achieves the cap, and weak
relaxations may exclude nothing at all, with $\rho(N)=N$ and deficit
zero, a case the dichotomy of Section~\ref{sec:main} handles trivially.
Achievement, the matching lower bound $N-\rho(N)\ge c\sqrt N$ together
with exact constants, requires structure; such matching laws are
established under the additional hypotheses recalled in the introduction
and are neither used nor reproved here. Above the threshold,
therefore, no improvement of $g$, of $\kappa$ or of the reference can
produce a relaxation deficit asymptotically larger than order $\sqrt N$;
obtaining a larger deficit therefore requires either reaching or crossing
the singleton-cost threshold, which Section~\ref{sec:threshold} shows
suffices in the abstract setting, or modifying at least one of the
structural ingredients of the separable bilinear layer paradigm.

\section{The threshold is sharp}\label{sec:threshold}

The singleton cost enters the proof at three places: the pair test
supplies $\kappa(1)\ge1+2\gamma$, which is the entire quadratic
mechanism; the full-span all-ones energy supplies the kernel-mass bound
\eqref{eq:K} with budget $u_N+\gamma$; and the singleton layers of the
witnesses and of the two-pile profile carry the cost $\gamma$ per layer.
Theorem~\ref{thm:main} shows the bookkeeping closes for every
$\gamma>-\tfrac12$. At the threshold it fails in the abstract class, and
not for want of a better proof; the geometric Lennard--Jones system, by
contrast, retains the ceiling there (Proposition~\ref{prop:geoendpoint}),
loses it far below (Proposition~\ref{prop:geofailure}), and certifies
nothing once the singleton cost passes the stability constant
(Proposition~\ref{prop:deepcost}).

\begin{proposition}[Sharpness at the threshold]\label{prop:essential}
There exist an abstract layer system satisfying (W1) and (W2), a valid
separable bilinear layer relaxation on it with $\gamma=-\tfrac12$, and a
bounded reference sequence, sound for the system, such that
\[
N-\rho(N)\;=\;N-1\qquad\text{for every }N\ge2 .
\]
\end{proposition}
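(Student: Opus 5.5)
The plan is to build the counterexample as simply as possible. I take the kernel identically zero, choose the layer cost so that both witness inequalities hold with equality, and let the system consist of every profile with energy equal to the relaxation itself. The kernel choice is forced in spirit by Lemma~\ref{lem:unit}: at $\gamma=-\tfrac12$ the pair test only gives $\kappa(1)\ge\kappa_0=0$, so the quadratic mechanism of the proof of Proposition~\ref{prop:explicit} is entirely switched off. I therefore take $\kappa\equiv0$, which attains that bound with equality, as announced in Remark~\ref{rem:sharp}. With $\kappa\equiv0$ we have $B_m=0$ and $\Phi_m=0$, and Lemma~\ref{lem:intra} reduces to $g(n)\le-(3n-2)+\tfrac{2n-1}2=\tfrac32-2n$. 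This suggests the affine choice
\[
g(n):=\tfrac32-2n\qquad(n\ge1),
\]
which indeed gives $g(1)=-\tfrac12=\gamma$.

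\emph{The system.} I let $\mathcal X_N$ be the set of all tuples of positive integers summing to $N$, with the profile map equal to the identity. The energy is $E(X):=\eLB(n(X))=\sum_k g(n_k)=\tfrac32D-2N$. Validity then holds with equality. For (W1), the profile $(1,1)$ has energy $2\gamma=-1$. For (W2), the profile with $2n-1$ singleton layers and one layer of population $n$ has energy
\[
-\tfrac{2n-1}2+\tfrac32-2n=-(3n-2),
\]
so both axioms hold with equality.

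\emph{Reference and exclusion.} Since $\eLB$ depends only on $D$ and $N$, the minimum at span $D$ is $\tfrac32D-2N$, which is strictly increasing in $D$. I set $\Vref(N):=-2N+2$, that is $u_N=2-2/N\in[0,2)$, so the sequence is bounded with $\ub=2$. For $D=1$ the minimum is $\tfrac32-2N\le\Vref(N)$, so the span $1$ is not excluded. For every $D\ge2$ the minimum is at least $3-2N>\Vref(N)$, so every such span is excluded. Hence $\rho(N)=1$, and $N-\rho(N)=N-1$ for all $N\ge2$.

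\emph{Soundness.} The true minimum of $E$ over $\mathcal X_N$ is $\tfrac32-2N$, attained by the single-layer profile $(N)$, and this value lies below $\Vref(N)$. So the reference is a genuine upper bound on the optimum, as soundness requires.

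The only step that needs any care is choosing $\Vref$ so that it separates $D=1$ from $D=2$ while remaining sound and bounded per unit size. Because the gap between consecutive spans is exactly $\tfrac32$, the offset $+2$ sits inside this gap, which settles it. I would also remark that $N=1$ is excluded from the statement only because then $D=1=N$ and the deficit is trivially zero.
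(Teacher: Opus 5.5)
Your proposal is correct and follows the paper's own construction: the same choices $g(n)=\tfrac32-2n$ and $\kappa\equiv0$, and the same observation that $\eLB=-2N+\tfrac32D$ separates $D=1$ from every $D\ge2$. The differences are cosmetic. You populate the system with every composition, with energy equal to $\eLB$, and take $\Vref(N)=-2N+2$. The paper instead uses a minimal system (the single-layer configurations $A_N$, the pair, and the (W2) witnesses) and takes the exact ground-state value $\Vref(N)=-2N+\tfrac32$. Both choices are valid, bounded and sound.
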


\begin{proof}
Take $g(n)=-2n+\tfrac32$ and $\kappa\equiv0$, so $\gamma=-\tfrac12$ and
$\kappa(1)=0=1+2\gamma$: Lemma~\ref{lem:unit} is saturated. Let the
system consist, for each $N\ge1$, of a configuration $A_N$ with profile
$(N)$ and energy $E(A_N)=-2N+\tfrac32$; of one configuration with profile
$(1,1)$ and energy $-1$; and, for each $n\ge2$, of the (W2) witness of
size $3n-1$ on $2n$ layers with energy $-(3n-2)$. Validity holds with
equality on every configuration of the system:
$\eLB(A_N)=g(N)=E(A_N)$; $\eLB(1,1)=2\gamma=-1$; and
$\eLB=g(n)+(2n-1)\gamma=(-2n+\tfrac32)-\tfrac{2n-1}2=-(3n-2)$ on the
(W2) witnesses. The axioms (W1) and (W2) hold, with equality. Take
$\Vref(N):=E(A_N)=-2N+\tfrac32=-u_NN$, so $u_N=2-\tfrac3{2N}\in(0,2)$ is
bounded, and the reference is sound: $A_N$ realizes the minimum of $E$
over the size-$N$ configurations of the system, since the (W2) witness of
size $N=3n-1$ has energy $-(3n-2)=-N+1>-2N+\tfrac32$ for $N\ge2$, and the
pair has energy $-1>-\tfrac52=E(A_2)$. Now for every profile of size $N$
and span $D$,
\[
\eLB(n_1,\dots,n_D)=\sum_{k=1}^D\Bigl(-2n_k+\tfrac32\Bigr)
=-2N+\tfrac32D,
\]
which equals $\Vref(N)$ at $D=1$ and strictly exceeds it for every
$D\ge2$. Every span $D\ge2$ is excluded, $\rho(N)=1$, and the deficit is
$N-1$.
\end{proof}

\begin{proposition}[The geometry retains the ceiling at and below the
endpoint]\label{prop:geoendpoint}
Let $(g,\kappa)$ be a valid separable bilinear layer relaxation of the
oriented Lennard--Jones system of Section~\ref{sec:lj} with
\[
-\tfrac{8319}{12288}\;<\;\gamma\;\le\;-\tfrac12,
\]
and let $(u_N)$ be a bounded reference sequence. Then, with
$\kappa_\star(\gamma):=\tfrac13\bigl(3\gamma+\tfrac{8319}{4096}\bigr)>0$
and $v_\star:=\ub/\kappa_\star(\gamma)$,
\[
N-\rho(N)\;\le\;2\sqrt{v_\star\,N}+35(v_\star+1)
\qquad\text{for all }N\ge300\,(v_\star+1);
\]
in particular, at the endpoint $\gamma=-\tfrac12$ one has
$v_\star=\lambda\ub$ with $\lambda:=4096/725$, and the endpoint
failure of Proposition~\ref{prop:essential} does not occur
geometrically.
\end{proposition}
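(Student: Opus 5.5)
The plan is to rerun the proof of Proposition~\ref{prop:explicit} verbatim, with one change: the nearest-neighbour bound of Lemma~\ref{lem:unit} is replaced by a bound coming from a three-atom chain witness. At $\gamma\le-\tfrac12$, Lemma~\ref{lem:unit} only gives $\kappa(1)\ge 1+2\gamma\le0$, which is useless, so a different source of coupling weight is needed.

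\emph{Step 1: the three-atom witness.} Take $X=\{(0,0,0),(1,0,0),(2,0,0)\}$. This configuration is fully spanning with profile $(1,1,1)$, since its diameter $2$ lies along the $x$-axis and the atoms sit in $L_1,L_2,L_3$. Its energy is
\[
E(X)=2V_{\mathrm{LJ}}(1)+V_{\mathrm{LJ}}(2)=-2+\bigl(2^{-12}-2^{-5}\bigr)=-\tfrac{8319}{4096}.
\]
At this profile $\eLB=3\gamma-2\kappa(1)-\kappa(2)$, so validity gives
\[
2\kappa(1)+\kappa(2)\;\ge\;3\gamma+\tfrac{8319}{4096}=3\kappa_\star(\gamma).
\]
Since the left side is a weighted average with weights $2,1$ times $3$, we get $\max(\kappa(1),\kappa(2))\ge\kappa_\star(\gamma)$. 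The lower bound $\gamma>-\tfrac{8319}{12288}$ is exactly what makes $\kappa_\star>0$. Fix $d_\star\in\{1,2\}$ with $\kappa(d_\star)\ge\kappa_\star$.

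\emph{Step 2: the lemmas that carry over.} The exclusion dichotomy and \eqref{eq:K}, \eqref{eq:mass} use only $\kappa\ge0$, so they hold unchanged. Because $\gamma<0$, we have $w=\ub$. Lemma~\ref{lem:intra} uses only (W2), which holds by Proposition~\ref{prop:lj}. The one point to recheck is Lemma~\ref{lem:linceiling}, whose proof used $|\gamma|<\tfrac12$. Now $|\gamma|<\tfrac{8319}{12288}<0.68$, so $(2n-1)|\gamma|<1.36\,n$. This gives
\[
g(n)\le 8wn-4w-3n+2+1.36\,n\le 8wn \qquad\text{for } n\ge2,
\]
since $-1.64n+2<0$ there. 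Hence $g(n)\le 8wn$ on $2\le n\le N/4$ still holds.

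\emph{Step 3: the crossing profile.} Use the same $e$ and $D=N-e$ as before, with $v$ replaced by $v_\star=w/\kappa_\star$, but place the two piles at distance $d_\star$ rather than adjacent. If $d_\star=2$, a single singleton layer separates them, and $D\ge3$ is ensured by $e\le N/2-4$. The singleton layers contribute $(D-2)\gamma$ or $(D-3)\gamma$, which is $\le 0=N\max(\gamma,0)$. Dropping all nonpositive coupling terms except the pile-to-pile term $-\kappa(d_\star)\lfloor e/2\rfloor\lceil e/2\rceil\le-\kappa_\star(e^2-1)/4$ reproduces the earlier estimate with $\kappa_0$ replaced by $\kappa_\star$. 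The same arithmetic then yields
\[
\eLB(P)\le-\kappa_\star v_\star N=-\ub N\le\Vref(N),
\]
so the span $N-e$ is not excluded, and the explicit bound follows as before.

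\emph{Step 4: the endpoint.} At $\gamma=-\tfrac12$,
\[
\kappa_\star=\tfrac13\Bigl(\tfrac{8319-6144}{4096}\Bigr)=\tfrac{725}{4096},
\]
so $v_\star=\tfrac{4096}{725}\,\ub$. In particular the deficit is $O(\sqrt N)$, contradicting the behaviour $N-1$ of Proposition~\ref{prop:essential}.

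I expect the main obstacle to be Step~1: noticing that one cannot bound $\kappa(1)$ alone and must instead use the dichotomy on $\max(\kappa(1),\kappa(2))$, which explains the factor $\tfrac13$. The remaining work is checking that the slack in Lemma~\ref{lem:linceiling} survives the wider range of $\gamma$.
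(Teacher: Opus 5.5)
Your proposal is correct and follows the paper's proof essentially step for step: the three-atom chain gives $2\kappa(1)+\kappa(2)\ge3\kappa_\star(\gamma)$, hence $\kappa(r)\ge\kappa_\star$ for some $r\in\{1,2\}$; the dichotomy and the linear ceiling survive with $w=\ub$ once the absorption of $(2n-1)|\gamma|$ is rechecked; and the two-pile crossing is rerun with the piles at distance $r$. The only cosmetic difference is that you run the arithmetic directly with $\kappa_\star$ and $v_\star$, whereas the paper uses $v=\ub/\kappa(r)$ and then invokes $v\le v_\star$. Separately, your singleton count $(D-3)\gamma$ should read $(D-2)\gamma$, which is immaterial since the term is nonpositive and dropped.
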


\begin{proof}
The three-atom chain
$X_3=\{(\tfrac12,0,0),(\tfrac32,0,0),(\tfrac52,0,0)\}$ is fully spanning
with profile $(1,1,1)$, its diameter being the axial segment of length
$2$, and
\[
E(X_3)=2V_{\mathrm{LJ}}(1)+V_{\mathrm{LJ}}(2)
=-2-\tfrac{127}{4096}=-\tfrac{8319}{4096}.
\]
Validity at $X_3$ reads $3\gamma-2\kappa(1)-\kappa(2)\le E(X_3)$, that
is,
\[
2\kappa(1)+\kappa(2)\;\ge\;3\gamma-E(X_3)
\;=\;3\gamma+\tfrac{8319}{4096}\;=\;3\kappa_\star(\gamma)\;>\;0,
\qquad\text{hence}\qquad
\kappa(r)\;\ge\;\kappa_\star(\gamma)
\]
for some $r\in\{1,2\}$, fixed by the relaxation; at $\gamma=-\tfrac12$,
$\kappa_\star=\tfrac{725}{4096}=\lambda^{-1}$. This replaces
Lemma~\ref{lem:unit}, which at and below the endpoint yields only
$\kappa(1)\ge1+2\gamma\le0$. The remaining ingredients survive on the
whole stated range: the dichotomy gives, under exclusion of the full
span, $B_N<(u_N+\gamma)N$ with $\gamma<0$ and the mass bound
\eqref{eq:mass} with $w=\ub$; and in the proof of
Lemma~\ref{lem:linceiling} the absorption
$-(2n-1)\gamma=(2n-1)|\gamma|\le3n-2$ holds for every $n\ge2$ since
$|\gamma|<\tfrac{8319}{12288}<\tfrac{3n-2}{2n-1}$, so
$g(n)\le8\ub\,n$ for $2\le n\le N/4$. Run the proof of Proposition~\ref{prop:explicit} with
the two piles placed at layer distance $r$ instead of $1$, every other
inter-layer term being dropped by sign, and with $v:=\ub/\kappa(r)$: the
background singleton term $(D-2)\gamma$ is now nonpositive and is
dropped as well, and the computation gives
$\eLB(P)\le8\ub(e+2)-\kappa(r)\tfrac{e^2-1}4\le-\ub N\le\Vref(N)$ for
$e=\lceil2\sqrt{vN+1}\rceil+\lceil34v\rceil+5$, verbatim. Since
$v=\ub/\kappa(r)\le v_\star$, the display follows.
\end{proof}

\begin{remark}[Separation, and below the endpoint]\label{rem:separation}
Propositions~\ref{prop:essential} and~\ref{prop:geoendpoint} together
locate the counterexample precisely: at $\gamma=-\tfrac12$ the witness
axioms admit a linear deficit while geometric validity does not. The
separating object is a single further witness, the three-atom chain,
whose energy lies strictly below the additive pair count; no analogue is
available in the abstract class, where (W1) and (W2) can be satisfied
with equality. The proposition already exploits the test on its full range
$\gamma>-\tfrac{8319}{12288}\approx-0.677$, where the constant
$v_\star$ diverges; longer all-ones chains, whose per-atom
Lennard--Jones energy approaches $-(2\zeta(6)-\zeta(12))\approx-1.0344$,
would push the mechanism further, and we do not pursue them.
Proposition~\ref{prop:geofailure} shows the mechanism must in any case
stop: sufficiently far below the endpoint the geometric ceiling fails.
\end{remark}

\begin{proposition}[Geometric failure far below the threshold]
\label{prop:geofailure}
Let $V^*_N$ denote the Lennard--Jones ground-state energy and
$\Vref(N):=V^*_N$ the exact reference, sound by construction and bounded
by stability \cite{ruelle}. There exist a valid separable bilinear layer
relaxation $(g,\kappa\equiv0)$ of the oriented Lennard--Jones system and
a constant $\theta\ge\tfrac1{125}$ such that
\[
N-\rho(N)\;\ge\;\frac{\theta}2N-1
\qquad\text{for all $N$ large.}
\]
Its singleton cost satisfies $\gamma\le-q$, where
$q:=1+2\zeta(6)-\zeta(12)=2.03444\ldots$, far below the threshold
$-\tfrac12$; more generally, the underlying family realizes every
singleton cost in the open interval $(-C,-q)$, with $C\ge2.11867$ the
stability constant of the proof; the linear-deficit coefficient
degenerates as $\gamma\uparrow-q$, and the onset of exclusion becomes
non-uniform as $\gamma\downarrow-C$.
\end{proposition}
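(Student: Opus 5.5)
The plan is to take $\kappa\equiv0$ and an affine layer cost
\[
g(n)=-C\,n+(C+\gamma),
\]
so that $g(1)=\gamma$. For every profile of size $N$ and span $D$ this gives
\[
\eLB(n)=-C N+(C+\gamma)D .
\]
Here $C$ is a stability-type constant that I still have to fix. With this choice, validity becomes a single inequality: every fully spanning Lennard--Jones configuration with $N$ atoms in $D$ occupied layers must satisfy
\[
E(X)\ge -CN+(C+\gamma)D.
\]
The exclusion test is also explicit. A span $D$ is excluded exactly when $D>(V^*_N+CN)/(C+\gamma)$. Hence
\[
\rho(N)=\min\bigl(N,\lfloor (V^*_N+CN)/(C+\gamma)\rfloor\bigr).
\]
The deficit is then linear as soon as $V^*_N+CN\le(1-\theta/2)(C+\gamma)N$ for large $N$.

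\emph{Step 1 (layerwise stability).} The first task is to prove a bound of the form $E(X)\ge-\sum_k\bigl(Cn_k-c_0\bigr)$ with $c_0>0$: each occupied layer earns a fixed rebate $c_0$ against the bulk stability rate. I would distribute the pair energy among atoms, charging each atom half of its pair interactions. The bulk rate $-C$ per atom is Ruelle-type stability \cite{ruelle}. The rebate comes from the sign structure of $V_{\mathrm{LJ}}$: within a unit slab, the energy an atom can gain from atoms in its own layer, plus the attractive tails from other layers, falls short of the stability rate by a definite amount. The all-ones chain fixes the scale. Its energy per atom tends to $-(2\zeta(6)-\zeta(12))=-(q-1)$, which explains the constraint $\gamma\le-q$: validity on long chains with $N=D$ reads $E\ge\gamma N$ asymptotically. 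Once such an inequality holds with rebate $c_0=C+\gamma$, the whole family $\gamma\in(-C,-q)$ is obtained by shrinking the rebate, since a smaller rebate keeps validity. This is how the family realizes every singleton cost in that interval.

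\emph{Step 2 (choice of $C$ and the deficit).} Next I choose $C$ strictly larger than the asymptotic ground-state rate $\lim V^*_N/N$ in absolute value, by a positive margin proportional to $C+\gamma$. Concretely, I would take $C$ to be a crude but provable stability constant, verified numerically to exceed $2.11867$. I would then use the known upper bound $V^*_N\le -e_\infty N+o(N)$ coming from lattice configurations. Together these give $V^*_N+CN\le(1-\theta/2)(C+\gamma)N$ with an explicit $\theta\ge\tfrac1{125}$, where $\theta$ is estimated from the gap. This yields $N-\rho(N)\ge\tfrac\theta2N-1$, the $-1$ absorbing the floor. The two degenerations in the statement follow from this bookkeeping. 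As $\gamma\uparrow-q$, the margin left for the deficit forces $\theta\to0$. As $\gamma\downarrow-C$, the denominator $C+\gamma$ vanishes, so the $N$ beyond which exclusion begins grows without bound.

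\emph{Main obstacle.} The hard step is Step 1: a per-layer rebate inequality, uniform over all fully spanning configurations, including dense layers adjacent to singleton layers. A singleton-layer atom may still bind strongly to heavy neighbouring layers, so the rebate cannot be attributed naively to that atom. I would first try an amortized charging scheme in which each layer receives the interactions of its own atoms with the layer to its right. The aim is to show that the per-atom energy rate $C$ is never saturated inside a slab of width one, which would give a rebate of at least $C-(\text{best per-atom energy in a slab})$. If that argument is too lossy, the fallback is to accept a larger constant $C$, which only costs a smaller $\theta$.
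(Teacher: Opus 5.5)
\paragraph{The gap: validity.} Your relaxation has the right shape. With $\kappa\equiv0$ and affine $g$, $\eLB=-cN+\beta D$ is exactly the paper's form, with slope $c$ and per-layer rebate $\beta=c+\gamma$. But validity, which you rightly call the crux, is not proved, and the slab-local mechanism you propose cannot supply it. Stability at the sharp rate is a global statement with no known slab-local form. A layer cutting through a large compact cluster has energy per atom essentially $-C$, and whatever rebate it earns comes from its surface rim. A layer-by-layer charging proof of $E(X)\ge-cN+\beta D$ would therefore need rigorous surface-energy lower bounds that are not available. Your fallback of a larger slope does not help, since a local bound on per-atom energy is still missing.

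The missing idea is a global dichotomy on the number $N-D$ of excess atoms. Let $C:=\sup_N(-V^*_N/N)$, a well-defined constant whose numerical value is never needed, and take $c=C+\beta(1-\theta)$.
\begin{itemize}
\item If $D<(1-\theta)N$, stability alone gives $-E(X)+\beta D\le CN+\beta(1-\theta)N=cN$. The slope margin, summed over all atoms, pays every layer's rebate at once.
\item If $D\ge(1-\theta)N$, keep one atom per layer. This skeleton satisfies $-E(X')\le qD$ by the layer-distance bounds $b(1)=1$, $b(d)=-V_{\mathrm{LJ}}(d-1)$. Each of the at most $\theta N$ excess atoms interacts with it by at most $K=1+2q$, and stability bounds the excess--excess energy. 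Hence $-E(X)\le[q+(C+K-q)\theta]N$.
\end{itemize}
Validity then holds with $\beta=C-q$ and $\theta(K+2(C-q))\le C-q$. This is also the true origin of $-q$. A straight chain forces only $\gamma\le-(q-1)$, since its energy per atom tends to $\zeta(12)-2\zeta(6)$.

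\paragraph{Constants and the family.} These need the same repair.
\begin{itemize}
\item Run exclusion against the exact reference through Fekete, $u_N=-V^*_N/N\to C$, with the same $C$ that controls validity. Then $D$ is excluded iff $\beta D>(\beta(1-\theta)+\delta_N)N$, with $\delta_N=C-u_N\to0$. Pairing a crude provable constant for the slope with a lattice upper bound on $V^*_N$ instead makes the argument depend on the unknown gap between $C$ and the lattice rate.
\item Shrinking the rebate at a fixed slope $c>C$ does preserve validity. But once $\beta\le c-C$, no $D\le N$ is excluded for large $N$.
\item If your $C$ is a crude constant above the bulk rate, part of your interval lies below $-\sup_N(-V^*_N/N)$. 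There Proposition~\ref{prop:deepcost} shows nothing is ever excluded.
\item To realize every $\gamma\in(-C,-q)$ with a linear deficit, the slope must descend to $C$ together with the rebate. The paper does this with $c=C+a(1-\theta_a)$, $\gamma=-C+a\theta_a$ and $\theta_a=(1-\eta)(C-q)/(C+K-q+a)$.
\item The degeneration as $\gamma\downarrow-C$ is then the non-uniformity of the condition $\delta_N\le a\theta_a/2$. It is not a vanishing denominator at fixed slope.
\end{itemize}
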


\begin{proof}
Set $K:=3+2\bigl(2\zeta(6)-\zeta(12)\bigr)=1+2q$ and
$C:=\sup_N\bigl(-V^*_N/N\bigr)$, finite by stability \cite{ruelle}. By
the definition of $C$, every finite Lennard--Jones configuration $Y$
satisfies $E(Y)\ge-C|Y|$, and the certified six-atom minimizer gives
$C\ge-V^*_6/6=2.11867\ldots>q$ \cite{ks2025}; write $\beta:=C-q>0$. By
superadditivity of $-V^*$ (two minimizers placed far apart interact
attractively, so $V^*_{M+N}\le V^*_M+V^*_N$), Fekete's lemma gives
$u_N:=-V^*_N/N\to C$; set $\delta_N:=C-u_N\ge0$, so $\delta_N\to0$,
without monotonicity.

\emph{Step 1: elongated configurations.} Let $X$ be fully spanning with
span $D\ge(1-\theta)N$, let $X'\subset X$ keep one atom per layer, and
put $\mathcal E:=X\setminus X'$, $|\mathcal E|=N-D\le\theta N$. Split the
pairs into $X'\!\times\!X'$, $X'\!\times\!\mathcal E$ and
$\mathcal E\!\times\!\mathcal E$. Atoms in layers $i,j$ are at distance
greater than $|i-j|-1$; moreover $-V_{\mathrm{LJ}}\le1$ everywhere and
$-V_{\mathrm{LJ}}$ is decreasing on $[1,\infty)$. Hence the kept atoms
satisfy
\[
-E(X')\;\le\;\sum_{d=1}^{D-1}(D-d)\,b(d)\;\le\;qD,
\qquad
b(1):=1,\quad b(d):=-V_{\mathrm{LJ}}(d-1)\ (d\ge2),
\]
since $\sum_{d\ge1}b(d)=1+\sum_{m\ge1}\bigl(2m^{-6}-m^{-12}\bigr)=q$;
each atom of $\mathcal E$ interacts with $X'$ by at most
$3+2\sum_{m\ge1}\bigl(2m^{-6}-m^{-12}\bigr)=K$, since at most three
kept atoms lie at layer distance at most one and at most two kept atoms
lie at each layer distance $d\ge2$; and
$-E(\mathcal E)\le C\,|\mathcal E|$ by stability. Hence
\[
-E(X)\;\le\;qD+(C+K)\,|\mathcal E|
\;\le\;\bigl[q+(C+K-q)\,\theta\bigr]N .
\]

\emph{Step 2: the relaxation.} Take $\kappa\equiv0$ and
$g(n):=-cn+\beta$ with $c:=C+\beta(1-\theta)$ and
\[
\theta:=\frac{C-q}{2\bigl(K+2(C-q)\bigr)}
\;\ge\;\frac{0.0842}{2\,(5.069+0.169)}\;>\;\frac1{125},
\]
the lower bound because $\theta$ is increasing in $C-q\ge0.0842$. Then
$\eLB(n)=-cN+\beta D$ for every profile of size $N$ and span $D$.
Validity: for $D<(1-\theta)N$,
$-E(X)+\beta D\le CN+\beta(1-\theta)N=cN$, since $E(X)\ge-CN$; for
$D\ge(1-\theta)N$, Step~1 gives
$-E(X)+\beta D\le\bigl[q+(C+K-q)\theta+\beta\bigr]N\le cN$, the last
inequality being $\theta\bigl(K+2(C-q)\bigr)\le C-q$, true with room by
the choice of $\theta$ (using $\beta=C-q$, so that
$C+K-q+\beta=K+2(C-q)$).

\emph{Step 3: exclusion.} The span $D$ is excluded at size $N$ exactly
when $\beta D>(c-u_N)N=\bigl(\beta(1-\theta)+\delta_N\bigr)N$. For $N$
large enough that $\delta_N\le\beta\theta/2$, every $D>(1-\theta/2)N$ is
excluded, while $D=1$ is not; hence $\rho(N)\le(1-\theta/2)N+1$ and the
deficit is at least $\theta N/2-1$. Finally
$\gamma=g(1)=-c+\beta=-C+\beta\theta\le-C+\beta=-q$.

\emph{Step 4: the interval covered.} The construction is a family. For
$\eta\in(0,1)$ and $a>0$, replace $\beta$ by $a$ and $\theta$ by
$\theta_a:=(1-\eta)(C-q)/(C+K-q+a)$, which satisfies the validity
constraint of Step~2 with slack $\eta$; then $\gamma=-C+a\theta_a$
increases continuously from $-C$ (as $a\downarrow0$) to $-q-\eta(C-q)$
(as $a\uparrow\infty$). Letting $\eta\downarrow0$, the union of these
families realizes every singleton cost in the open interval $(-C,-q)$.
The linear-deficit coefficient tends to zero as $\gamma\uparrow-q$. As
$\gamma\downarrow-C$, the coefficient $a$ of the span term tends to
zero and the onset of the asymptotic exclusion, the condition
$\delta_N\le a\theta_a/2$, becomes non-uniform; at $\gamma=-C$ the
full singleton span is never excluded
(Proposition~\ref{prop:deepcost}).
\end{proof}

\begin{remark}[Sound but not algorithmic]\label{rem:nonalgorithmic}
The relaxation of Proposition~\ref{prop:geofailure} is mathematically
explicit but not implementable: its slope involves the bulk constant
$C=\lim(-V^*_N/N)$, known only through bounds, and the exact reference
$V^*_N$ is precisely the quantity that certified computation seeks. The
construction is an existence statement about the class, not a usable
bound; soundness and boundedness are nonetheless exact properties of it,
which is all the refutation of the universal geometric statement
requires. Note also that Theorem~\ref{thm:main} never applied to it: the
witnesses hold geometrically, but the singleton cost sits far below the
threshold.
\end{remark}

\begin{proposition}[Very deep singleton costs certify nothing]
\label{prop:deepcost}
For the oriented Lennard--Jones system, every sound reference
($\Vref(N)\ge V^*_N$) and every relaxation of the class with singleton
cost $\gamma\le-C$ satisfy $\rho(N)=N$ for all $N$: the deficit is
identically zero. Validity is not even needed.
\end{proposition}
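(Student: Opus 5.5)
We show directly that the full span $D=N$ is never excluded. Then $\rho(N)=N$ by the definition of the relaxation span, since $\rho(N)$ is the largest non-excluded $D\le N$, and the deficit vanishes. No witness and no validity enter; the argument uses only the sign of the kernel, the value $g(1)=\gamma$, and stability of the Lennard--Jones potential.

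First, compute the relaxed functional on the unique admissible profile at span $D=N$, the all-ones profile $(1,\dots,1)$. From \eqref{eq:elb},
\[
\eLB(1,\dots,1)=N\gamma-\sum_{1\le i<j\le N}\kappa(j-i)=N\gamma-B_N .
\]
Because $\kappa\ge0$ we have $B_N\ge0$, so the minimum over profiles of span $N$ is at most $N\gamma$. The hypothesis $\gamma\le-C$ then gives $N\gamma\le-CN$.

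Second, bound the sound reference from below. By definition $C=\sup_N\bigl(-V^*_N/N\bigr)$, the stability constant introduced in the proof of Proposition~\ref{prop:geofailure}, which is finite by \cite{ruelle}. Hence $V^*_N\ge-CN$ for every $N$, and soundness gives
\[
\Vref(N)\ge V^*_N\ge-CN .
\]

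Combining the two steps,
\[
\min\eLB\le N\gamma-B_N\le-CN\le\Vref(N).
\]
The strict inequality required for exclusion in Definition~\ref{def:relax} therefore fails at $D=N$, and $\rho(N)=N$.

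There is no real obstacle here. The only care points are two. First, $C$ must be the supremum rather than the limit, so that $V^*_N\ge-CN$ holds for every $N$ and not just asymptotically; this is exactly how $C$ was defined. Second, the exclusion criterion is strict, so the equality case $\gamma=-C$ with $B_N=0$ is still not excluded.
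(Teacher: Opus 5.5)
Your proposal is correct and follows essentially the same route as the paper: evaluate $\eLB$ at the all-ones profile, use $\kappa\ge0$ to get $\eLB\le N\gamma\le-CN$, and use $C=\sup_N(-V^*_N/N)$ with soundness to get $-CN\le V^*_N\le\Vref(N)$. The span $D=N$ is therefore never excluded. Your two care points, that $C$ is a supremum rather than a limit and that exclusion is strict, are exactly what makes the chain go through for every $N$, including the equality case.
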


\begin{proof}
At the all-ones profile,
$\eLB(1,\dots,1)=N\gamma-B_N\le N\gamma\le-CN\le V^*_N\le\Vref(N)$,
using $\kappa\ge0$ and $V^*_N\ge-CN$; the span $D=N$ is never excluded.
\end{proof}

The certifying power of the class over the geometric system is therefore
reentrant in the singleton cost: identically zero deficit for
$\gamma\le-C$, linear deficit realizable on $(-C,-q)$, undetermined on
$[-q,-\tfrac{8319}{12288}]$, and at most $O(\sqrt N)$ for
$\gamma>-\tfrac{8319}{12288}$. An excessively negative singleton
penalty disarms the relaxation entirely; an intermediate one permits a
linear deficit; and the geometry restores the square-root ceiling as
$\gamma$ rises. The precise object is the set $\mathcal F_{\mathrm{LJ}}$ of those
$\gamma\in\R$ for which some valid Lennard--Jones layer relaxation with
singleton cost $\gamma$ and sound bounded reference satisfies
$\limsup_{N\to\infty}(N-\rho(N))/N>0$.
Propositions~\ref{prop:geofailure} and~\ref{prop:deepcost} and the
ceilings of Theorem~\ref{thm:main} and
Proposition~\ref{prop:geoendpoint} give
\[
(-C,-q)\subset\mathcal F_{\mathrm{LJ}},
\qquad
\mathcal F_{\mathrm{LJ}}\cap\Bigl((-\infty,-C]\cup
\bigl(-\tfrac{8319}{12288},\infty\bigr)\Bigr)=\varnothing .
\]

\begin{question}[Geometric transition]\label{q:geo}
Determine $\mathcal F_{\mathrm{LJ}}\cap[-q,-\tfrac{8319}{12288}]$. In
particular, is $\mathcal F_{\mathrm{LJ}}$ a single interval of the form
$(-C,\gamma_c)$?
\end{question}

\section*{Declarations}

\paragraph{Data availability.} This article contains no associated data:
all results are mathematical proofs, and every numerical constant used
($V^*_6$, $\zeta(6)$, $\zeta(12)$, the rational thresholds) is either
quoted from the cited literature or computable in closed form from the
expressions given in the text.

\paragraph{Conflict of interest.} The author declares no competing
interests.

\paragraph{Funding.} This research received no external funding.

\end{document}